\newdefinition{example}{Example}
\newtheorem{theorem}{Theorem}
\newdefinition{definition}{Definition}
\newtheorem{corollary}{Corollary}
\newcommand\independent{\protect\mathpalette{\protect\independenT}{\perp}}
\def\independenT#1#2{\mathrel{\rlap{$#1#2$}\mkern2mu{#1#2}}}
\pgfplotsset{compat=newest}
\begin{document}

\begin{frontmatter}
%% Title, authors and addresses

%% use the tnoteref command within \title for footnotes;
%% use the tnotetext command for theassociated footnote;
%% use the fnref command within \author or \address for footnotes;
%% use the fntext command for theassociated footnote;
%% use the corref command within \author for corresponding author footnotes;
%% use the cortext command for theassociated footnote;
%% use the ead command for the email address,
%% and the form \ead[url] for the home page:
%% \title{Title\tnoteref{label1}}
%% \tnotetext[label1]{}
%% \author{Name\corref{cor1}\fnref{label2}}
%% \ead{email address}
%% \ead[url]{home page}
%% \fntext[label2]{}
%% \cortext[cor1]{}
%% \address{Address\fnref{label3}}
%% \fntext[label3]{}

\title{A geometric characterization of sensitivity analysis in monomial models}

%% use optional labels to link authors explicitly to addresses:
%% \author[label1,label2]{}
%% \address[label1]{}
%% \address[label2]{}

\author{Manuele Leonelli\footnote{Corresponding Author. Full address: School of Human Sciences and Technology, Calle Maria de Molina 6, 28006 Madrid, Spain. Email: manuele.leonelli@ie.edu.}}

\address{School of Human Sciences and Technology, IE University, Spain}
\author{Eva Riccomagno}

\address{Dipartimento di Matematica, Universit\'{a} degli Studi di Genova, Italia.}

\begin{abstract}
Sensitivity analysis in probabilistic discrete graphical models is usually conducted by varying one probability at a time and observing how this affects output probabilities of interest. When one probability is varied, then others are proportionally covaried to respect the sum-to-one condition of probability laws. The choice of  proportional covariation is justified by multiple optimality conditions, under which the original and the varied distributions are as close as possible under different measures. For variations of more than one parameter at a time, proportional covariation is justified in some special cases only.  In this work, for the large class of discrete statistical models entertaining a regular monomial parametrisation, we demonstrate the optimality of newly defined proportional multi-way schemes with respect to an optimality criterion based on the I-divergence. We demonstrate that there are varying parameters' choices for which proportional covariation is not optimal and identify the sub-family of distributions where the distance between the original distribution and the one where probabilities are covaried proportionally is minimum. This is shown by adopting a new geometric characterization of sensitivity analysis in monomial models, which include most probabilistic graphical models. We also demonstrate the optimality of proportional covariation for  multi-way analyses in Naive Bayes classifiers.
\end{abstract}

\begin{keyword}
Bayesian network classifiers \sep Covariation \sep I-projections \sep Monomial models \sep Sensitivity analysis. 
%% keywords here, in the form: keyword \sep keyword

%% PACS codes here, in the form: \PACS code \sep code

%% MSC codes here, in the form: \MSC code \sep code
%% or \MSC[2008] code \sep code (2000 is the default)

\end{keyword}

\end{frontmatter}

%% \linenumberss

%% main text

%%%%%%%%%%%%%%%%%%%%%%%
%%%%%%%%%%%%%%%%%%%%%%%
%%%%%%%%%%%%%%%%%%%%%%%
\section{Introduction}
\label{sec:introduction}

The effect of changes of a quantitative model's inputs to outputs of interest is studied through sensitivity analysis. It is a critical step for both the construction and the validation of a model as well as the communication of its results. Because of the importance of the application of sensitivity analyses, their use has been constantly increasing in the past ten years \citep{Ferretti2016}. The specific steps of a sensitivity analysis greatly vary depending on the quantitative model used \citep[see][for an overview]{Borgonovo2016,Saltelli2004}.

In the context of probabilistic graphical models, sensitivity analysis techniques are well-established \citep{Amsalu2017,Kaminski2018,Gaag2007,Yakaboski2018} and, especially for discrete Bayesian networks (BNs), applied in many real-world problems \cite{Chen2012,Hanninen2012,Kleemann2017, Makaba2020,Pitchforth2013}. A comprehensive review has been recently presented in \cite{Rohmer2020}. For such models, the validation process can be broken down into two steps: the first concerns the auditing of the validity of the conditional independences implied by the underlying graphical structure; the second, assuming the graph is valid, checks the impact of the numerical elicited probabilities on outputs of interest. Henceforth, the focus of this paper lies in this second phase.

The impact of the input probabilities in probabilistic graphical models is studied both at a local and a global level. At the local level, the effect of parameter variations on a specific output probability of interest is summarized by sensitivity functions \citep{Coupe2002,Gaag2007}. These correspond to the probability of interest written as a function of a varying parameter. At the global level, the effect of parameter variations are quantified by divergences and distances between the original and the varied model's distribution. Common measures to quantify the dissimilarity between the two distributions are the Chan-Darwiche distance and the Kullback-Leibler divergence \citep{Chan2005}.

Because of both its simplicity and its proven theoretical justifications, the most common investigation is the so-called \textit{one-way} sensitivity analysis, where the impacts of changes made to a single probability parameter are studied. One-way analyses are implemented in various pieces of software, e.g. SiamIam and the \texttt{bnmonitor} R package\footnote{\texttt{bnmonitor} is freely available for download at \url{https://github.com/manueleleonelli/bnmonitor}}. When one parameter is varied, then others are required to be adjusted, or \emph{covaried}, to respect the sum-to-one condition of probabilities. Although there are various ways to covary probabilities, the most common covariation scheme is the \emph{proportional} one, where, after a change to a parameter, the covarying parameters have the same proportion of the residual probability mass as they originally had \citep{Laskey1995,Renooij2014}. Proportional covariation in one-way analyses is \lq\lq{o}ptimal\rq\rq{}  since it minimizes a large array of divergences between the original and the varied probability distributions amongst any valid covariation scheme~\citep{Chan2005,Leonelli2017}.

Multi-way methods, where two or more parameters are varied contemporaneously, have not been extensively studied in the literature  \citep[see][for some exceptions]{Bolt2014,Bolt2017,Chan2004,Charitos2006,Bock2014,Kjaerulff2000,Renooij2012}. This is not only because they require a much more intensive computational power, but also, and more critically, because there are very little, if no, theoretical justifications in using a covariation scheme over another. Output probabilities have been shown to heavily depend on the covariation scheme used \cite{Leonelli2017,Renooij2014}. In \cite{Leonelli2017} it has been recently proven that for specific multi-way analyses called \emph{full single conditional probability table} (CPT) analyses, which highly restrict the parameters that can be varied, proportional covariation is optimal. However, to date there is no theoretical result guaranteeing the optimality of proportional covariation for multi-way variations.

This work provides a conclusive answer to the optimality of proportional covariation by characterizing the varying parameter choices in multi-way analyses for which proportional covariation minimizes the I-divergence, also known as Kullback-Leibler divergence~\citep{Csiszar2004}.  This is achieved via a new formal, geometric characterization of sensitivity analysis in terms of I-projections of the original distribution into a well-specified subset of the probability simplex. New flexible ways to choose varying parameters are introduced and their optimality with respect to the I-divergence is demonstrated.

\begin{comment}
In this paper we demonstrate the optimality of a variety of newly defined, flexible multi-way covariation schemes where individual parameters are proportionally covaried. By taking an information geometry approach \cite{Amari2016,Csiszar2004}, we provide a new formal, geometric characterization of sensitivity analysis in terms of distances and projections over a probability simplex. We demonstrate that the probability distributions resulting from our multi-way schemes correspond to the \emph{I-projection} of the original distribution, i.e. the distribution with smallest I-divergence from the original one within a well-specified subset of the probability simplex. More generally, we derive the condition which specifies the family of distributions for which proportional covariation is the I-projection. For our covariation schemes such condition is void, thus implying that these schemes are optimal. But in general the condition restricts the family of distributions usually investigated in sensitivity analysis and therefore, depending on the choice of parameters varied, proportional covariation is not always optimal. 
\end{comment}

As a consequence of these results,  the optimality of proportional covariation in naive Bayes classifiers (e.g.~\citep{Bielza2014}) for any combination of probabilities associated to feature variables is proven. Naive Bayes models are a specific type of BN classifiers often used to assign instances to a specific class in a classification problem. The tuning of the feature's probabilities is often critical to ensure that the classifier works reliably~\citep{Bolt2017}.

As in~\citep{Leonelli2017},  models where the probability of any element of the sample space is represented by a monomial are considered. They are called \emph{monomial models} (MMs). For ease of exposition, throughout the manuscript we use BNs as an illustration of MMs but there are many other well-known statistical models that can be seen as a specific instance of MMs \cite{Gorgen2015,Leonelli2019,Leonelli2017}: for instance staged trees and chain event graphs \cite{Smith2008}, context-specific BNs \cite{Boutilier1996}, decomposable Markov networks and probabilistic chain graphs~\citep{Koller2009}. 

The paper is structured as follows. Section~\ref{sec:monomial} includes the definition of MMs and some examples. Section~\ref{sec:proj} reviews the essential notions from information geometry. Section~\ref{sec:sensitivity}  summarizes the main steps of a sensitivity analysis. Section \ref{sec:multicov} introduces new classes of multi-way sensitivity analyses. Section~\ref{sec:geometric} gives a geometric characterization of sensitivity analysis and proves the optimality of our schemes.   Section~\ref{sec:real} discusses an applied sensitivity analysis in a medical application. The paper is concluded by a discussion. Longer proofs  are collated in \ref{sec:proof}. 

%%%%%%%%%%%%%%%%%%%%%%%
%%%%%%%%%%%%%%%%%%%%%%%
%%%%%%%%%%%%%%%%%%%%%%%
\section{Monomial discrete parametric models}
\label{sec:monomial}
Let $\mathbb Y$ be a finite set with $q$ elements  and 
 $\operatorname{P}$  a strictly positive probability density function for $\mathbb Y$.  
We write $\#\mathbb{Y}=q$, call $ y\in \mathbb Y$ an atom and $\operatorname{P}(y)$ the atomic probability of $ y$.  
The generic probability $\operatorname{P}$ can be seen as a point in the interior set of the $q$-dimensional simplex and we write $\operatorname{P}\in \Delta_{q-1}$.  Let also $[k]=\{1,2,\ldots,k\}$.
Next, to $\mathbb Y$ we associate a particular class of parametric statistical models, called monomial  models, in short MMs.

A MM is defined by three elements:
\begin{itemize}
\item a $q\times k$ matrix $A$ with non-negative integer entries, i.e. $A\in\mathcal{M}_{q\times k}(\mathbb{Z}_{\geq 0})$;
\item  a $k$-dimensional parameter vector $\theta$ with positive real entries, i.e. $\theta=(\theta_i)_{i\in[k]}\in\mathbb{R}^k_{>0}$;
\item a partition $S=\{S_1,\dots,S_n\}$ of $[k]$ such that $\theta_{S_i}\in\Delta_{\#S_i-1}$ for all $i\in[n]$.
\end{itemize}
 There is a row of $A$ for each atom $y$ and $A_y$ indicates the $y$-th row of $A$. 
The atomic probability of $y\in\mathbb{Y}$ given $\theta$ and $A$ is defined as  
$\operatorname{P}( y ) = \prod_{i \in [k]}  \theta_i^{ A_{ y,i}} = \theta^{A_y}$. In a MM  the parameters are grouped in such a way that those in a group sum to one. 
For a subset $S\subset[k]$, the notation $\theta_S=(\theta_i)_{i\in S}$ indicates the sub-vector of elements of $\theta$ indexed by $S$ and $\theta_S^{A_{y,S}}=\prod_{i\in S}\theta_i^{A_{y,i}}$ denotes the monomial associated to an event $y\in\mathbb{Y}$ where only parameters $\theta_i$ for $i\in S$ can have non-zero exponent.

\begin{definition} The MM  over $\mathbb Y$ associated to $\theta$, $A$ and $S$ is  defined as 
\begin{align*}
\operatorname{MM}(A,S) =
\left\{
\operatorname{P}  \in \Delta_{q-1} : \prod_{i\in[n]}\prod_{j\in S_i}\theta_j^{A_{y,j}}=\prod_{i\in[n]}\theta_{S_i}^{A_{y,S_i}} \mbox{ for }y\in\mathbb{Y}  \mbox{ and } \theta=(\theta_{S_i})_{i\in[n]}\in\bigtimes\limits_{i\in[n]}\Delta_{\#S_i-1}
\right\} 
\end{align*} 
\end{definition} 
The assumption of strictly positive probabilities is often met in practice, for instance for models learnt with complete data \citep{Heckerman1995}. The condition is imposed here to ensure the I-divergence exists and is finite. Degenerate cases are avoided by requiring $\theta\in\mathbb{R}^k_{>0}$ and in particular $A$ cannot have all elements of a row equal to one or all equal to zero. 

\begin{example}
The simplest example of a  $\operatorname{MM}(A,S)$ over a finite set $\mathbb Y$ is the saturated model where $\operatorname{P}(y)=\theta_y$ for all $y\in \mathbb Y$,  i.e. one parameter is associated to  the probability of each atomic event. In this case $\theta\in\Delta_{q-1}$ and $A$ is the $q$-by-$q$ identity matrix. %Notice that t
\end{example}

\begin{example} \rm 
\label{ex:first}
 Let $\mathbb{Y}=[4]$, $A$ be the $4\times 3$ matrix with rows $A_1=(1,0,0)$, $A_2=(0,1,0)$, $A_3=(0,0,1)$ and $A_4=(0,1,1)$, and $S=[4]$, i.e. the sum of all parameters must be one. Atomic probabilities are defined by  $\operatorname{P}(y)=\theta_y$ for $y\in[3]$ and $\operatorname{P}(y)=\theta_2\theta_3$ for $y=4$, entailing 
 $\operatorname{P}(4)=\operatorname{P}(3)\operatorname{P}(2)$. This is not a MM since it is not possible to have $\sum_{i\in [4]}\operatorname{P}(i)=1$.
\end{example}

\begin{definition} \label{def:MDPMMM}
 A $\operatorname{MM}(A,S)$ is said to be \emph{multilinear} if $A\in\mathcal{M}_{q\times k}(\{0,1\})$.  A multilinear $\operatorname{MM}(A,S)$ is called \emph{regular} if for all $y\in\mathbb{Y}$ and all $i\in[n]$, $A_{y,j}=1$ for at most one $j\in S_i$ and zero otherwise.
\end{definition} 
A MM is multilinear if all its monomials are square-free, i.e. the exponents of the parameters are either zero or one. A regular MM is such that for each $y\in\mathbb{Y}$ and $S_i\in S$, there is at most one parameter $\theta_j$, $j\in S_i$, with non-zero exponent.

Henceforth we work with regular multilinear MMs. The authors have not been able to find a multilinear MM model which is not regular nor to prove that any multilinear square-free MM is regular. BNs, staged trees that admit a multilinear monomial representation as well as decomposable Markov networks and context specific BNs are regular. 

\subsection{Bayesian networks}
Many discrete statistical problems in a variety of domains are often modelled using BNs and there are now thousands of practical applications of these models~\citep{Bielza2014a,Cai2018,Drury2017,Mclachlan2020}. 
A BN expresses graphically a collection of conditional independences~\citep{Darwiche2009,Koller2009}. For a random vector $Y=(Y_i)_{i\in[m]}$ taking values in the Cartesian product $\mathbb{Y}=\bigtimes\limits_{i\in[m]}\mathbb{Y}_i$ and three disjoint subsets $B$, $C,$ and $D$ of $[m]$, the marginal vector $Y_B$ is said to be conditionally independent of $Y_C$ given $Y_D$ if 
$\operatorname{P}(Y_B=b|Y_C=c,Y_D=d)=\operatorname{P}(Y_B=b|Y_D=d)$,
for all $b\in\bigtimes\limits_{i\in B}\mathbb{Y}_i$, $c\in\bigtimes\limits_{i\in C}\mathbb{Y}_i$ and $d\in\bigtimes\limits_{i\in D}\mathbb{Y}_i$.  

A BN over a discrete random vector $Y=(Y_i)_{i\in[m]}$  is given by
\begin{itemize}
	\item $m-1$ \textit{conditional independence} statements of the form $Y_i\independent Y_{[i-1]}\;|\, Y_{\Pi_i}$, where $\Pi_i\subseteq [i-1]$;
	\item a \textit{directed acyclic graph} 
	 $\mathcal{G}=(\mathcal{V},\mathcal{E})$ with vertex set $\mathcal{V}=\{Y_i: i \in [m]\}$ 
	 and edge set $\mathcal{E}=\{(Y_i,Y_j):j\in[m],i\in\Pi_j\}$;
	\item conditional probabilities  $\operatorname{P}(Y_i=y_i|Y_{\Pi_i}=y_{\pi_i})$ for every $y_i\in\mathbb{Y}_i$, $y_{\pi_i}\in\bigtimes\limits_{j\in\Pi_i}\mathbb{Y}_j$ and $i\in[m]$. 
\end{itemize}

The components of the vector $Y_{\Pi_i}$  are said to be the \textit{parents} of the vertex $Y_i$,
and in the graphical representation of a BN there is an arrow from each component of $Y_{\Pi_i}$ pointing into $Y_i$.
% }  those vertices $Y_j$ such that there is an edge $(Y_j,Y_i)$ in the DAG $\G$ of the BN. 
For a vertex $Y_i$ with parents $Y_{\Pi_i}$, let $\theta_{y_iy_{\pi_i}}=\operatorname{P}(Y_i=y_i|Y_{\Pi_i}=y_{\pi_i})$.  The probability of any atom $y=(y_1,\dots,y_m)\in\mathbb{Y}$ can then be written as the monomial
$
\operatorname{P}(Y=y)=\prod_{i\in[m]}\theta_{y_iy_{\pi_i}}$ \citep{Castillo1997,Darwiche2003} Notice that $(\theta_{y_iy_{\pi_i}})_{y_i\in\mathbb{Y}_i}\in\Delta_{\#\mathbb{Y}_i-1}$ for all $i\in[m]$ and 
any possible value of the parent set of node $i$, namely $y_{\pi_i}\in\mathbb{Y}_{\Pi_i}$. 
 Thus a BN is a multilinear regular MM where parameters associated to each vertex conditionally to each combination of parents need to respect the sum-to-one condition.

\begin{example}
\label{ex:BN}
Suppose we are interested in studying how a population's health $(Y_3)$ is affected by both sports activity $(Y_1)$ and alcoholic drinking habits $(Y_2)$. Suppose these three variables can be categorized into high, medium and low, coded with $3$, $2$ and~$1$ respectively. Suppose that  health's levels are a function of both sports activity and drinking habits and that people who work out a lot tend to drink less alcohol. This situation can be depicted by a complete BN  with probabilities 
\[
\operatorname{P}(Y_1=i)=\theta_{i},\quad 
\operatorname{P}(Y_2=j|Y_1=i)=\theta_{ji},\quad 
\operatorname{P}(Y_3=l|Y_2=j,Y_1=i)=\theta_{lji}
\]
where $\sum_{k\in[3]}\theta_k=1$, $\sum_{k\in[3]}\theta_{ki}=1$ and $\sum_{k\in[3]}\theta_{kji}=1$ for all  $i,j\in[3]$. 
 The associated MM is given in Table~\ref{table:monomials} where the monomial representation of the $27$ atomic probabilities is listed. 
Of course the parameters $\theta_k$, $\theta_{ki}$ and $\theta_{kji}$ can be renamed to give some $(\theta_l)_{l\in[39]}$.
The $A$ matrix has dimension $27\times 39$ and is very sparse: in each row there is a one in three positions and zero otherwise.
  
 \begin{comment} 
\begin{figure}
\entrymodifiers={++[o][F-]}
\centerline{
\xymatrix{
Y_1\ar[r]\ar@/^1pc/[rr]&Y_2\ar[r]&Y_3
}
}
\caption{A BN model for the medical problem in Example~\ref{ex:BN}. \label{fig:BN}}
\end{figure}
\end{comment}

\begin{table}
\begin{center}
\begin{tabular}{|ccccccccc|}
\hline
$\theta_{1}\theta_{11}\theta_{111}$&$\theta_{1}\theta_{11}\theta_{211}$&$\theta_{1}\theta_{11}\theta_{311}$&$\theta_{1}\theta_{21}\theta_{121}$&$\theta_{1}\theta_{21}\theta_{221}$&$\theta_{1}\theta_{21}\theta_{321}$&$\theta_{1}\theta_{31}\theta_{131}$&$\theta_{1}\theta_{31}\theta_{231}$&$\theta_{1}\theta_{31}\theta_{331}$\\
$\theta_{2}\theta_{12}\theta_{112}$&$\theta_{2}\theta_{12}\theta_{212}$&$\theta_{2}\theta_{12}\theta_{312}$&$\theta_{2}\theta_{22}\theta_{122}$&$\theta_{2}\theta_{22}\theta_{222}$&$\theta_{2}\theta_{22}\theta_{322}$&$\theta_{2}\theta_{32}\theta_{132}$&$\theta_{2}\theta_{32}\theta_{232}$&$\theta_{2}\theta_{32}\theta_{332}$\\
$\theta_{3}\theta_{13}\theta_{113}$&$\theta_{3}\theta_{13}\theta_{213}$&$\theta_{3}\theta_{13}\theta_{313}$&$\theta_{3}\theta_{23}\theta_{123}$&$\theta_{3}\theta_{23}\theta_{223}$&$\theta_{3}\theta_{23}\theta_{323}$&$\theta_{3}\theta_{33}\theta_{133}$&$\theta_{3}\theta_{33}\theta_{233}$&$\theta_{3}\theta_{33}\theta_{333}$\\
\hline
\end{tabular}
\end{center}
\caption{Monomial atomic probabilities for the BN of Example~\ref{ex:BN}.\label{table:monomials}}
\end{table}
\end{example}

%%%%%%%%%%%%%%%%%%%%%%%
%%%%%%%%%%%%%%%%%%%%%%%
%%%%%%%%%%%%%%%%%%%%%%%
\section{I-projections}
\label{sec:proj}
 % In this  section we formalise what we mean by closeness of two distributions.  A 
As a measure of closeness of two distributions we consider the I-divergence. Below we follow \cite[Chapter~3]{Csiszar2004}.

\begin{definition} 
\label{def:Idiv}

%\begin{color}{blue}  Let $\mathbb Y$ be a finite space, $Y$ a random vector over $\mathbb Y$ and $\operatorname{P}$ and $\operatorname{Q}$ two probability density functions for $Y$. \end{color}
Let $\operatorname{P}$ and $\operatorname{Q}$ be two probability distributions over a finite space $\mathbb Y$. 
 The I-divergence (or Kullback-Leibler divergence) from P to Q  is defined as
\begin{align*}
\mathcal D(\operatorname{Q}|| \operatorname{P}) = \sum_{y\in \mathbb Y} \operatorname{Q}( y) \ln \frac{\operatorname{Q}( y)}{\operatorname{P}( y)}
\end{align*}
% where $0\log 0$ is defined to be $0$. 
\end{definition}
It is often of interest to find the distribution that, within a given set, is closest to a given $\operatorname{P}$. 
%\in\mathcal{F}$ $\mathcal{F}\subseteq\Delta_{q-1}$ and. 
\emph{I-projections} formalize this idea.  
 I-projections are used e.g. for maximum likelihood estimation in the context of exponential families.

\begin{definition}
Let $L$ %\subset \mathcal F$ be a (non-empty) 
 be a closed, convex set in the pointwise topology of distributions over $\mathbb Y$. The I-projection of a distribution $\operatorname{P}$  over $\mathbb Y$
 %\in \mathcal F$ 
 onto $L$ is  a  distribution $\operatorname{P}^ \ast \in L $ such that
\[
\mathcal D(\operatorname{P}^\ast || \operatorname{P}) = \min_{\operatorname{Q}\in L} \mathcal D(\operatorname{Q}||\operatorname{P}).
\] 
\end{definition} 
If $\operatorname{P}\in L$ then $\operatorname{P}^\ast=\operatorname{P}$. 
The fact that $L$ is closed and convex guarantees that $\operatorname{P}^\ast$ exists in $L$, and for strictly positive probabilities $\operatorname{P}^\ast$ is unique. 
\begin{theorem}
\label{theo:new}
Let $\operatorname{P}^*$ be the I-projection  of $\operatorname{P}$ in $L$. For all $\operatorname{Q}\in L$ it holds 
\[
 \mathcal D(\operatorname{Q} || \operatorname{P}) \geq \mathcal D(\operatorname{Q} || \operatorname{P}^\ast )  + \mathcal D(\operatorname{P}^\ast || \operatorname{P}).
\]
\end{theorem}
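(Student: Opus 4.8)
The plan is to derive the inequality from the first-order optimality condition satisfied by the I-projection, exploiting the convexity of $L$. First I would fix an arbitrary $\operatorname{Q}\in L$ and, for $t\in[0,1]$, consider the segment $\operatorname{Q}_t=(1-t)\operatorname{P}^\ast+t\operatorname{Q}$, which lies in $L$ by convexity and, thanks to the standing strict-positivity assumption, stays in the interior of $\Delta_{q-1}$ for all $t\in[0,1]$. Setting $g(t)=\mathcal D(\operatorname{Q}_t\,\|\,\operatorname{P})$, the fact that $\operatorname{P}^\ast=\operatorname{Q}_0$ minimises $\mathcal D(\cdot\,\|\,\operatorname{P})$ over $L$ forces $g$ to attain its minimum on $[0,1]$ at $t=0$, hence $g'(0^+)\ge 0$.

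Next I would compute $g'(0^+)$. Since $\tfrac{d}{dt}\operatorname{Q}_t(y)=\operatorname{Q}(y)-\operatorname{P}^\ast(y)$ and every $\operatorname{Q}_t(y)$ is bounded away from $0$, differentiating $g(t)=\sum_{y\in\mathbb Y}\operatorname{Q}_t(y)\ln\bigl(\operatorname{Q}_t(y)/\operatorname{P}(y)\bigr)$ term by term gives
\[
g'(t)=\sum_{y\in\mathbb Y}\bigl(\operatorname{Q}(y)-\operatorname{P}^\ast(y)\bigr)\ln\frac{\operatorname{Q}_t(y)}{\operatorname{P}(y)}+\sum_{y\in\mathbb Y}\bigl(\operatorname{Q}(y)-\operatorname{P}^\ast(y)\bigr),
\]
and the last sum is $0$ because $\operatorname{Q}$ and $\operatorname{P}^\ast$ both sum to one. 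Evaluating at $t=0$ yields $g'(0^+)=\sum_{y\in\mathbb Y}\bigl(\operatorname{Q}(y)-\operatorname{P}^\ast(y)\bigr)\ln\bigl(\operatorname{P}^\ast(y)/\operatorname{P}(y)\bigr)$.

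Finally I would check that this quantity is exactly the difference between the two sides of the claimed inequality. Expanding the divergences and cancelling the $\ln\operatorname{Q}(y)$ terms shared by $\mathcal D(\operatorname{Q}\,\|\,\operatorname{P})$ and $\mathcal D(\operatorname{Q}\,\|\,\operatorname{P}^\ast)$ gives
\[
\mathcal D(\operatorname{Q}\,\|\,\operatorname{P})-\mathcal D(\operatorname{Q}\,\|\,\operatorname{P}^\ast)-\mathcal D(\operatorname{P}^\ast\,\|\,\operatorname{P})=\sum_{y\in\mathbb Y}\bigl(\operatorname{Q}(y)-\operatorname{P}^\ast(y)\bigr)\ln\frac{\operatorname{P}^\ast(y)}{\operatorname{P}(y)}=g'(0^+)\ge 0,
\]
which is precisely the statement.

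The main obstacle is not algebraic but analytic: one has to make sure $g$ is genuinely right-differentiable at $t=0$, so that the optimality of $t=0$ translates into the sign condition $g'(0^+)\ge 0$. This is exactly where strict positivity is used — it keeps each coordinate of $\operatorname{Q}_t$ uniformly away from $0$ on $[0,1]$, so that $t\mapsto\operatorname{Q}_t(y)\ln(\operatorname{Q}_t(y)/\operatorname{P}(y))$ is continuously differentiable there and termwise differentiation of the finite sum is legitimate. With that point secured, the remainder is a routine computation.
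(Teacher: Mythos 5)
Your proof is correct: parametrising the segment $\operatorname{Q}_t=(1-t)\operatorname{P}^\ast+t\operatorname{Q}\in L$, using minimality of $\operatorname{P}^\ast$ to get $g'(0^+)\ge 0$, and identifying $g'(0^+)=\mathcal D(\operatorname{Q}\,\|\,\operatorname{P})-\mathcal D(\operatorname{Q}\,\|\,\operatorname{P}^\ast)-\mathcal D(\operatorname{P}^\ast\,\|\,\operatorname{P})$ is exactly the standard argument, and your attention to strict positivity (which the paper assumes throughout) legitimises the differentiation of the finite sum. The paper does not prove this theorem itself but quotes it from its reference on I-projections (Csisz\'ar's treatment, Chapter~3), where the proof is essentially the convexity/directional-derivative argument you give, so your proposal matches the intended justification.
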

As a straightforward consequence of Theorem \ref{theo:new}, if the Pythagorean identity 
\begin{equation}
\label{eq:pitagora}
 \mathcal D(\operatorname{Q} || \operatorname{P}) = \mathcal D(\operatorname{Q} || \operatorname{R})  + \mathcal D(\operatorname{R} || \operatorname{P})  
 \end{equation}
holds  for all $\operatorname{Q}\in L$ and a specific $\operatorname{R}\in L$ then $\operatorname{R}=\operatorname{P}^\ast$.
\begin{comment}
In particular for the I-projection $P^*$ it holds the Phytagorean identity
\begin{equation}
\label{eq:pitagora}
 \mathcal D(\operatorname{Q} || \operatorname{P}) = \mathcal D(\operatorname{Q} || \operatorname{P}^\ast )  + \mathcal D(\operatorname{P}^\ast || \operatorname{P})  \,\,\,\,\,\,\,\,\,\,\, \mbox{ for all }  \operatorname{Q}\in \mathcal{L},
 \end{equation}
 \end{comment}
Theorem \ref{theo:new} is used extensively in Section~\ref{sec:geometric} to prove the optimality of the new multi-way covariation schemes introduced  in Section~\ref{sec:multicov}.

\begin{comment}
\begin{corollary}
\label{cor:new}
If there exists $\operatorname{R}\in L$ such that $\mathcal D(\operatorname{Q} || \operatorname{P}) \geq \mathcal D(\operatorname{Q} || \operatorname{R} )  + \mathcal D(\operatorname{R} || \operatorname{P})$ for all $\operatorname{Q}\in L$ then $\operatorname{R}=\operatorname{P}^\ast$, i.e. $\operatorname{R}$ is the I-projection of $\operatorname{P}$ in $L$.
\end{corollary}
\end{comment}

%%%%%%%%%%%%%%%%%%%%%%%
%%%%%%%%%%%%%%%%%%%%%%%
%%%%%%%%%%%%%%%%%%%%%%%
\section{Sensitivity analysis}
\label{sec:sensitivity}

\subsection{Covariation}
\label{sec:cov}

When some parameters of a (conditional) probability distribution are varied to a new specific value,  then the remaining parameters need to be adjusted (or to \emph{covary}) to respect the sum-to-one condition of probability measures. In the binary case when one of the two parameters is varied this is straightforward, since the second parameter will be equal to one minus the other. But in generic discrete finite cases there are various considerations to be taken into account, as reviewed below.

We start by giving an alternative definition of a covariation scheme to~\citep{Renooij2014}. Our definition of covariation allows more than one or no parameters to be varied and maps into a probability simplex, i.e. the scheme is valid \citep{Renooij2014}. Let $k$ be the number of parameters in the model, 
$\emptyset$ the empty set and let $|v|$ denote the sum of the elements of a vector $v$.

\begin{definition}
For $\emptyset\neq V\subset S\subseteq[k]$, let $\theta_S \in \Delta_{\# S-1}$ be 
partitioned as $\theta_S=(\theta_V,\theta_{S\setminus V})$  and let  $\tilde{\theta}_V$ be such that $|\tilde{\theta}_V|\in(0,1)$. 
  A $\tilde\theta_V$-\emph{covariation scheme} is a function $\sigma$ from $\Delta_{\# S-1}$ to  $\Delta_{\# S-1}$ which fixes the subvector $\theta_V$ of $\theta_S$ to $\tilde\theta_V$, i.e.
\begin{align*}
\sigma: \hspace{0.5cm}  \Delta_{\# S-1}&\longrightarrow \Delta_{\# S-1}\\
(\theta_V,\theta_{S\setminus V})&\longmapsto (\tilde\theta_V,\cdot).
\end{align*}
When $V=\emptyset$, the $\tilde\theta_V$-covariation scheme is the identity function. 
\label{def:covar}
\end{definition}

Thus $\theta_S$ denotes a vector of parameters that need to respect the sum to one condition,  $\tilde\theta_V$ denotes the new numerical specification of the parameters varied, i.e. those with index in a set $V$, and the values of the parameters with index in $[k]\setminus S$ do not vary.
Below we generalise some frequently applied  covariation schemes. 
\begin{definition}
\label{def:schemes}
In the notation of Definition~\ref{def:covar}
\begin{itemize}
\item the $\tilde\theta_V$-\textit{proportional} covariation scheme $\sigma_{\operatorname{pro}}(\theta_S)=(\tilde\theta_V,\tilde\theta_{S\setminus V})$ is defined by setting 
	\[
		\tilde\theta_j=
		\frac{1-|\tilde{\theta}_V|}{1-|\theta_V|}\theta_j \qquad  \text{for all } j\in S\setminus V. 
	\] 
\item The $\tilde\theta_V$-\textit{uniform} covariation scheme, $\sigma_{\operatorname{uni}}(\theta_S)=(\tilde\theta_V,\tilde\theta_{S\setminus V})$ is defined by setting
	\[
	\tilde\theta_j=\frac{1-|\tilde{\theta}_V|}{\#S-\#V}  \qquad  \text{for all } j\in S\setminus V. 
	\]
	\end{itemize}
\end{definition}

Different covariation schemes may entertain different properties which, depending on the domain of application, might be more or less desirable~\citep[see][for a list]{Leonelli2017,Renooij2014}. Definition~\ref{def:schemes} extends the proportional and uniform covariation  schemes given in \citep{Renooij2014} to cases where one or more parameters are varied.

\begin{figure}
\begin{center}
\scalebox{0.8}{
\begin{tikzpicture}
\begin{axis}[
  view/h=134.5,
  axis lines=center,
  xmax=1.5, 
  ymax=1.5,
  zmax=1.5,
  ytick={1},
]
\addplot3[patch,red!70!black!50,forget plot] 
  coordinates 
  {
  (1,0,0) 
  (0,1,0) 
  (0,0,1)
  };
\addplot3[no markers,black,line width=1pt] 
  coordinates 
  { 
  (0.4,0,0.6) 
  (0.4,0.1,0.5) 
  (0.4,0.2,0.4) 
  (0.4,0.3,0.3) 
  (0.4,0.4,0.2) 
  (0.4,0.5,0.1) 
  (0.4,0.6,0) 
  };
\addplot3[no markers,black,dotted,line width=1.5pt] 
  coordinates 
  { 
  (0.6,0.3,0.1) 
  (0.4,0.3,0.3)
  };
  \addplot3[no markers,black,dotted, line width=1.5pt] 
  coordinates 
  { 
  (0.1,0.2,0.7) 
  (0.4,0.3,0.3)
  };
    \addplot3[no markers,black, dashed,line width=1.5pt] 
  coordinates 
  { 
  (0.1,0.2,0.7) 
  (0.4,0.133,0.467)
  };
      \addplot3[no markers,black,dashed,line width=1.5pt] 
  coordinates 
  { 
  (0.6,0.3,0.1) 
  (0.4,0.45,0.15)
  };
\node[fill=black,inner sep=1pt,circle,label={180:$\theta_{S1}$}] 
  at (axis cs:0.6,0.3,0.1) {};  
\node[fill=black,inner sep=1pt,circle,label={180:$\theta_{S2}$}] 
  at (axis cs:0.1,0.2,0.7) {};  
  \node[fill=black,inner sep=1pt,rectangle] at (axis cs:0.4,0.3,0.3){};
   \node[fill=black,inner sep=1pt,rectangle] at (axis cs:0.4,0.45,0.15){};
      \node[fill=black,inner sep=1pt,rectangle] at (axis cs:0.4,0.133,0.467){};
\end{axis}
\end{tikzpicture}}
\end{center}
\caption{Graphical representation of uniform and proportional covariation in Example~\ref{ex:ciao} for  $\theta_{S1},\theta_{S2}\in\Delta_2$.\label{fig:cov}}
\end{figure}

\begin{example}
\label{ex:ciao}
Consider  $\theta_S=(\theta_1,\theta_2,\theta_3)\in\Delta_2$, 
$V=\{1\}$ and $\tilde\theta_V=0.4$. The simplex $\Delta_2$ is given by the  surface in Figure~\ref{fig:cov}, whilst the dark full line denotes the  image of  any $\tilde\theta_V$-covariation scheme $\sigma$ which fixes $\tilde\theta_1=0.4$, i.e. the  set defined by the intersection of the simplex with the line defined by $\theta_1= 0.4$. That is, this line  describes all possible ways $\theta_2$ and $\theta_3$ can be covaried. When $\sigma$ is the uniform covariation scheme any $\theta_S\in\Delta_2$ is projected to the same point, as illustrated by the dotted lines in Figure~\ref{fig:cov}. 
Conversely, the dashed lines  refer to the proportional covariation scheme  which can  project points $\theta_S\in\Delta_2$ to different elements. 
\end{example}

The order-preserving covariation scheme in Definition~\ref{def:covarOrder} follows from changing one parameter  and its definition is slightly convoluted.
 Let thus $V=\{v\}$ have just one element and consider $S \supset V$ such that $|\theta_S|=1$.
  Assume that  $\theta_v$ is not the largest component of $\theta_S$ 
  and order the components of $\theta_S$ from the smallest to the largest. 
  Without loss of generality by reorganising the indices in $\theta$, 
  we can assume that the ordered components of $\theta_S$ are $\theta_1\leq \cdots \leq \theta_v<\cdots \leq \theta_{\#S}$.

\begin{definition} \label{def:covarOrder}
In the notation of Definition~\ref{def:covar}
 the $\tilde\theta_V$-\textit{order preserving} covariation scheme %, $\sigma_{\operatorname{ord}}(\theta_S)=(\tilde\theta_V,\tilde\theta_{S\setminus V})$ 
 is defined, according to whether $\theta_v$ is increased or decreased, by setting 
	\[
	\tilde\theta_j=\left\{
	\begin{array}{ll}
	\displaystyle \frac{ \tilde{\theta}_v}{\theta_v} \theta_j  , 
			&\mbox{if } j<v \mbox{ and } \tilde{\theta}_v\leq \theta_v,\\
	-\theta_j \displaystyle \frac{ 1-\theta_{\operatorname{suc}}}{\theta_{\operatorname{suc}}} \displaystyle\frac{\tilde{\theta}_v}{\theta_v}
		+\displaystyle \frac{\theta_j}{\theta_{\operatorname{suc}}}, 
			&\mbox{if } j>v \mbox{ and } \tilde{\theta}_v\leq \theta_v,\\
	\displaystyle \theta_j \frac{ \theta_{\operatorname{max}} -\tilde{\theta}_v }{\theta_{\operatorname{max}}-{\theta}_v} , 
			&\mbox{if } j<v \mbox{ and } \tilde{\theta}_v> \theta_v,\\
	\displaystyle (\theta_{\operatorname{max}} -\tilde{\theta}_v) \frac{{\theta}_j-\theta_{\operatorname{max}}}{\theta_{\operatorname{max}}-{\theta}_v} 
		+\theta_{\operatorname{max}},
			&\mbox{if } j>v \mbox{ and } \tilde{\theta}_v> \theta_v 
	\end{array} 
	\right .  
\]
where $\theta_{\operatorname{max}}=1/(1+\#S-v)$ is the upper bound for $\tilde{\theta}_v$ and $\theta_{\operatorname{suc}}=\sum_{k=v+1}^{\#S}\theta_k$ is the original total mass of the parameters in $\theta_S$ larger than $\theta_v$.
\end{definition}
 
 \begin{comment}
A public web app has been developed to intuitively perform such covariations and is available at the link \url{https://manueleleonelli.shinyapps.io/covariation/} and are implemented in the \texttt{bnmonitor} R package freely available for download at \url{https://github.com/manueleleonelli/bnmonitor}.
\end{comment}

Definitions~\ref{def:covar} to~\ref{def:covarOrder} assume $\theta_S\in\Delta_{\#S-1}$. Next we specialise them to apply to the parameter vector $\theta$ of a MM.

\begin{definition}
\label{def:bellissima}
In the notation of Definitions~\ref{def:MDPMMM} and~\ref{def:covar}, 
let $\theta$ be the parameter vector of a MM. For $V\subset[k]$, let $V_i=S_i\cap V$ and $\sigma_i$ a $\tilde\theta_{V_i}$-covariation scheme for each $i \in [n]$. 
Then:
\begin{itemize}
\item a $\tilde{\theta}_V$-\emph{covariation scheme} for $\theta$ is a function $\sigma:\bigtimes\limits_{i\in[n]}\Delta_{\#S_i-1}\rightarrow\bigtimes\limits_{i\in[n]}\Delta_{\#S_i-1}$  such that $\sigma_{|S_i}=\sigma_i$, where $\sigma_{|S_i}$ denotes the restriction of $\sigma$ over $\Delta_{\#S_i-1}$.
\item a $\tilde{\theta}_V$-covariation scheme for $\theta$ is called \textit{proportional} if $\sigma_i$ is a $\tilde\theta_{V_i}$-proportional covariation scheme whenever $V_i\neq\emptyset$.
\end{itemize}
\end{definition}

Definition~\ref{def:bellissima} formalizes how parameters in a MM need to covary for any choice of varied parameters  $\tilde\theta_V$. For instance, in a BN  model the sets $S_i$, 
 $i \in [n]$, denote the conditional probability distributions of any vertex given a specific combination of parents. If a full single CPT analysis is performed then $\tilde\theta_V$ includes one parameter from each conditional distribution associated to a given vertex. For such distributions, since $V_i$ is non-empty, a standard $\sigma_i$ covariation scheme is applied. In all other cases $V_i$ is empty and the 
$\tilde{\theta}_V$-covariation scheme for $\theta$ returns the original value of the parameters since $\sigma_i$ is defined as the identity function.

Notice that our definition of a covariation scheme for a MM model encompasses all types of sensitivity analyses usually considered in BN models: one-way sensitivity analysis if $V$ consists of one element only, full single CPT analyses as illustrated in the previous paragraph, multi-way analyses where multiple parameters from the same conditional distribution are varied, or generic multi-way analyses where any combination of parameters can be varied. 

\begin{comment}
The application of a covariation scheme to a monomial model does not change its structure as shown in Theorem~\ref{th:MMcovMM} below.
The probability law $\operatorname{P} \in \operatorname{MM}(A,S)$ is identified by a parameter $\theta=(\theta_V, \theta_{[k]\setminus V})$ and 
the probability law $\sigma(\operatorname{P})$ by $(\tilde\theta_V, \tilde\theta_{[k]\setminus V})$. With this notation we can prove the following.
\end{comment}

Our definition of a $\tilde\theta_{V_i}$-covariation scheme $\sigma$ guarantees that if $\operatorname{P} \in \operatorname{MM}(A,S)$ then  $\sigma(\textnormal{P}) \in \operatorname{MM}(A,S)$ for all covariation schemes $\sigma$, i.e. $\sigma(\textnormal{P})$ belongs to the same monomial model. 

\begin{comment}
\begin{theorem} \label{th:MMcovMM}
If $\operatorname{P} \in \operatorname{MM}(A,S)$ then 
$\sigma(\operatorname{P}) \in \operatorname{MM}(A,S)$ for all covariation schemes $\sigma$ in Definition \ref{def:bellissima}.
\end{theorem}
\end{comment}

\begin{comment}
\proof
The result follows by noting that the probability $\sigma(\operatorname{P})(y)=\prod_{i\in [n]}\prod_{j\in S_i}\tilde{\theta}_j^{A_{y,j}}$ for all $y\in\mathbb{Y}$. 
\endproof

\end{comment}
\subsection{Sensitivity functions}
Sensitivity functions~\cite{Chan2002,Coupe2002} 
are frequently used during  model validation to investigate how an output probability of interest varies as one (or possibly more) model's parameter is allowed to change.
They are particularly useful since, for instance, the conditional specification of probabilities in a BN might imply a marginal probability which appears to be unreasonable to a user, although being a coherent consequence of his/her beliefs. Sensitivity functions  depict the required change of a  parameter that would give a reasonable marginal probability. 
In essence they represent the functional relationship between a parameter being varied and the output probability of an event of interest.

Let $\operatorname{MM}(A,S)$ be a multilinear model over a finite set $\mathbb Y$
 whose parameter $\theta$ is a $k$-dimensional vector partitioned in $S_1,\ldots, S_n$ as in Definition~\ref{def:MDPMMM}. 
In the setting of Definition~\ref{def:bellissima} we fix $V\subset [k]$ and consider $V_i=V\cap S_i$ for $i\in [n]$. 
Now we  allow the $  \tilde\theta_{V_i}$, $i\in [n]$, to vary so that $| \tilde\theta_{V_i} | \in (0,1)$  and  for each value of $\tilde\theta_V$
 consider a  $\tilde\theta_V$-covariation scheme. 
 For this family of covariation schemes and for an event $E\subset \mathbb Y$ of interest,
 Definition~\ref{def:sensitivityfunction} gives the probability of $E$ under the different covariation schemes in the family.  
\begin{definition} \label{def:sensitivityfunction} 
Let $\sigma$ be a $\tilde\theta_V$-covariation scheme.  
For $\operatorname{P} \in \operatorname{MM}(A,S)$ 
the probability 
$\sigma(\operatorname{P})( E)$ read
as function of $\tilde\theta_V$ is called sensitivity function associated to the (family of) $\tilde\theta_V$-covariation schemes. 
\end{definition}
%
%By Theorem~\ref{th:MMcovMM} if $\operatorname{P} \in \operatorname{MM}(A,S)$
%then 
%$\sigma(\operatorname{P}) \in \operatorname{MM}(A,S)$, and in particular
%all $\sigma(\operatorname{P})( E)$ respect the monomial structure of $\operatorname{P}$. 
The fact that the resulting probability of an event of interest $\sigma(\operatorname{P})(E)$ depends on the covariation scheme used is illustrated by the following example. 

\begin{table}
\begin{center}
\begin{tabular}{|cccccc|}
\hline
$\theta_{1}=0.2$&$\theta_{2}=0.3$&$\theta_{3}=0.5$&
$\theta_{11}=0.2$&$\theta_{21}=0.3$&$\theta_{31}=0.5$\\
$\theta_{12}=0.3$&$\theta_{22}=0.3$&$\theta_{32}=0.4$&
$\theta_{13}=0.7$&$\theta_{23}=0.2$&$\theta_{33}=0.1$\\
$\theta_{111}=0.1$&$\theta_{211}=0.2$&$\theta_{311}=0.7$&
$\theta_{112}=0.1$&$\theta_{212}=0.3$&$\theta_{312}=0.6$\\
$\theta_{113}=0.2$&$\theta_{213}=0.3$&$\theta_{313}=0.5$&
$\theta_{121}=0.1$&$\theta_{221}=0.4$&$\theta_{321}=0.5$\\
$\theta_{122}=0.3$&$\theta_{222}=0.6$&$\theta_{322}=0.1$&
$\theta_{123}=0.3$&$\theta_{223}=0.5$&$\theta_{323}=0.2$\\
$\theta_{131}=0.8$&$\theta_{231}=0.1$&$\theta_{331}=0.1$&
$\theta_{132}=0.7$&$\theta_{232}=0.2$&$\theta_{332}=0.1$\\
$\theta_{133}=0.4$&$\theta_{233}=0.5$&$\theta_{333}=0.1$&
&&\\
\hline
\end{tabular}
\end{center}\caption{Probabilities associated to the BN in Example~\ref{ex:BN}. \label{table:app1}}
\end{table}

\begin{figure}
\begin{center}
\includegraphics[scale=0.6]{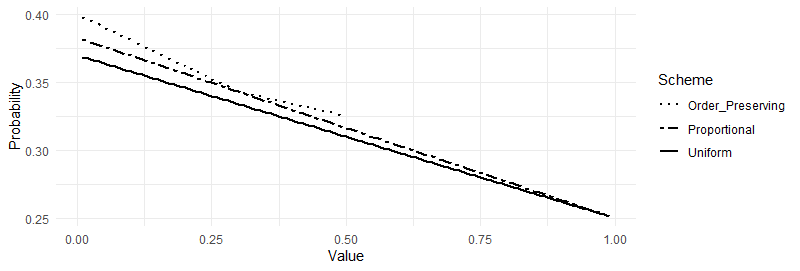}
\end{center}
\vspace{-0.5cm}
\caption{Sensitivity functions of Example~\ref{ex:2} for different covariation schemes.\label{fig:sens}}
\end{figure}

\begin{example}
\label{ex:2}
The BN of Example~\ref{ex:BN} is refined with the numerical specification of its parameters given in Table~\ref{table:app1}.
The probability of an individual being healthy is the event of interest, i.e. $\operatorname{P}(Y_3=3)$. 
Given the elicited probabilities, this is equal to $0.343$. However, for the population investigated $\operatorname{P}(Y_3=3)$ is known to be lower than $0.3$. 
To achieve this upper bound it is decided to try varying the parameter $\theta_2 \in (0,1)$.  
Figure~\ref{fig:sens}  reports the sensitivity functions for three families of covariation schemes. 
In a family all $\tilde\theta_2$-covariation schemes are proportional (dotted/dashed line), in another family they are uniform (solid line) and in the third family they are order-preserving (dotted line).
For proportional and uniform covariation, 
$\theta_2$ needs to be varied to around $0.6$, whilst for order-preserving covariation the required bound cannot be achieved, indeed order-preserving covariation restricts the values the varied parameter can take. 
\end{example}

As highlighted by Example~\ref{ex:2} a parameter variation might be enforced to entertain some specific bounds on probabilities of interest. However these  probabilities are affected by the choice of the covariation scheme. In some simple cases proportional covariation has been demonstrated to be optimal, in the sense that the original and the resulting probability distributions are as close as possible. 

\subsection{Global dissimilarity}
The closeness of the original and varied distributions can be quantified using different distances or divergences. 
The most commonly used distance in sensitivity studies is the so called CD distance~\citep{Chan2005}.
The CD distance is defined as the DeRobertis distance, which has been used for quite some time in the Bayesian inference literature~\citep{Gustafson1995}.
For two probability distributions $\operatorname{P}$ and $\operatorname{Q}$ over a finite space $\mathbb{Y}$ this is 
\[
\mathcal{D}_{\operatorname{CD}}(\operatorname{P},\operatorname{Q})=\ln\max_{y\in\mathbb{Y}}\left(\frac{\operatorname{P}(y)}{\operatorname{Q}(y)}\right)-\ln\min_{y\in\mathbb{Y}}\left(\frac{\operatorname{P}(y)}{\operatorname{Q}(y)}\right)=\max_{y,y'\in\mathbb{Y}}\ln \left(\frac{\operatorname{P}(y)\operatorname{Q}(y')}{\operatorname{P}(y')\operatorname{Q}(y)}\right).
\]
Until recently, proportional covariation had a theoretical justification only for one-way analyses in BN models, since this scheme minimizes the CD distance between the original and the varied distributions~\citep{Chan2005}. In~\citep{Leonelli2017} it is proven that this is also true for full single CPT analyses in any multilinear MM. 

Proportional covariation also minimizes the $\phi$-divergence from the original to the varied distribution in  full single CPT analyses \citep{Leonelli2017}. The $\phi$-divergence  from $\operatorname{P}$ to $\operatorname{Q}$ is defined as
\[
\mathcal{D}_{\phi}(\operatorname{Q}||\operatorname{P})=\sum_{y\in\mathbb{Y}}\operatorname{P}(y)\phi\left(\frac{\operatorname{Q}(y)}{\operatorname{P}(y)}\right), \hspace{1cm} \phi\in\Phi,
\]
where $\Phi$ is the class of convex functions $\phi(x)$, $x\geq 0$, such that $\phi(1)=0$, $0 \, \phi(0/0)=0$ and $0\phi(x/0)=\lim_{x\rightarrow \infty}\phi(x)/x$. The I-divergence in Definition~\ref{def:Idiv} can be seen as a special instance of $\phi$-divergences for $\phi(x)=x\ln(x)$. 

\begin{figure}
    \centering
\includegraphics[scale=0.6]{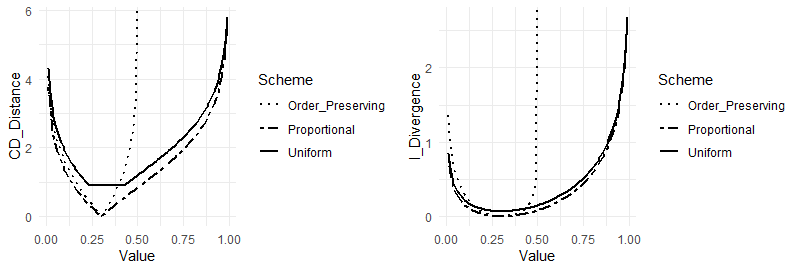}
    \caption{Measures of dissimilarity for the BN in Example~\ref{ex:BN} under different covariation schemes: CD distance (left) and I-divergence (right).\label{fig:diss}}
    \end{figure}
    
\begin{example}
Figure~\ref{fig:diss} reports the CD distance and I-divergence for the BN in Example~\ref{ex:BN} and for variations of $\theta_2$ under different covariation schemes. These plots show the optimality of proportional covariation (dashed/dotted line) which for both metrics takes smaller values than the other schemes. 
\end{example}

\section{Novel multi-way sensitivity analyses}
\label{sec:multicov}
Full single CPT analyses, for which the optimality of proportional covariation has been already proven, highly restrict the parameters that can be varied. To our knowledge, the only  attempt in defining other multi-way sensitivity analyses is given in~\citep{Bolt2017}, 
where \emph{balanced} analyses are introduced. In a nutshell, these reduce a multi-way problem into a one-way analysis by restricting the possible parameter variations. 

Because of the monomial structure of their atomic probabilities, more general ways to choose parameters to be varied can be defined for MMs.
These new multi-way analyses depend on the partition $\{S_1,\dots,S_n\} $  of the $k$ parameters of a MM and on the sets $V_i=V\cap S_i$, where $V\subset[k]$ and $i\in[n]$. 
Let $C=\bigcup\limits_{i\in[n]: V_i\neq\emptyset}S_i$ be the union of all $S_i$ for which $V_i$ is not empty and $F=[k]\setminus C$. 
The set $F$ includes the indices of the parameters that do not need to be covaried, whilst $C$ is the index set of the (co)varied parameters. Definition \ref{def:multi} gives special ways to choose $V$ which depend on the model structure.

\begin{definition}
\label{def:multi}
A sensitivity analysis  is said to be  
\begin{itemize}
\item \emph{simple}  if, for all $i,j\in C$, $\theta_i\theta_j$  does not divide $\theta^{A_y}$ for  any $y\in\mathbb{Y}$
	\item 
\emph{complete}  if there exists at least one $y\in\mathbb{Y}$ such that $\theta_H$ divides $\theta^{A_y}$ for all  $H\in\times_{i\in [n]:V_i\neq\emptyset}S_i$ 
\item \emph{ordered} if, given an ordered sequence of sets $S_{k_1},\dots, S_{k_l}$ such that $V_{k_i}=V\cap S_{k_i}\neq\emptyset$, $i\in[l]$, and $\{S_{k_1},\dots, S_{k_l}\}\subseteq \{S_1,\dots,S_n\}$,  there exists at least one $y\in\mathbb{Y}$ such that $\theta_H$ divides $\theta^{A_y}$ for all $H$ in the set 
\begin{equation}
\label{eq:nuova}
 \left\{ S_{k_1}\setminus V\right\} \cup_{i\in[l-1]}\left\{ \times_{j\in [i]}V_{k_j}\times \{S_{k_{i+1}}\setminus V\} \right\} \cup \{\times_{i\in[l]}V_{k_i}\}.
\end{equation}
\end{itemize}
\end{definition}

The set $C$ in a simple sensitivity analysis includes the indices of the varied or covaried parameters. In a complete  analysis all sets $H\in\times_{i\in [n]:V_i\neq\emptyset}S_i$ include the same number of indices, equal to $\sum_{i\in[n]}\mathds{1}_{\{S_i\cap V\neq \emptyset\}}$. Conversely  in an ordered analysis different sets $H$ include a different number of indices. This implies that all monomials $\theta_H$ have the same degree in complete analyses, whilst they have different degrees in ordered ones.

\begin{example}
For an ordered analysis suppose $\{S_{k_1},\dots,S_{k_l}\}=\{S_1,S_2,S_3\}$. Then (\ref{eq:nuova}) becomes
\[
\{S_1\setminus V\}\cup \{V_1\times\{S_2\setminus V\}\}\cup \{V_1\times V_2\times\{S_3\setminus V\}\}\cup \{V_1\times V_2\times V_3\}.
\]
\end{example}

Although the definition of such new analyses may appear obscure, these have a very simple graphical interpretation and include some well-known sensitivity analyses. 
In a simple sensitivity analysis no (co)varied parameters appear in the same monomial. It thus includes the following analyses in BN models:
\begin{itemize}
\item one-way sensitivity analyses: one parameter of a CPT of a vertex is varied
\item full single CPT analyses: one parameter from each conditional distribution of a vertex is varied
\item multi-way analyses where two or more parameters from one conditional distribution are varied
\item multi-way analyses where parameters from CPTs associated to incompatible parent configurations are varied. To see this consider the BN of Example \ref{ex:BN} and suppose the parameters $\theta_{22}$ and $\theta_{311}$ are varied, implying respectively that $Y_1=2$ and $Y_1=1$. From Table~\ref{table:monomials} we can see that no two parameters from the associated conditional probability laws appear in the same monomial
\item any combination of the four  above.
\end{itemize}

It is straightforward to see that no two (co)varied  parameters appears in the same monomial representing the probabilities of a BN for all choices listed above.

Complete sensitivity analyses are such that all possible combinations of (co)varied parameters in different sets $S_i$, for $i\in [n]$ such that $V_i\neq\emptyset$, appear in at least one monomial. The simplest possible example of such analyses is in the case of a BN consisting of two independent random variables where one parameter from each distribution is varied. But more generally such analyses are associated to varied parameters in CPTs implying disjoint parent sets. To illustrate this consider the BN in Figure~\ref{fig:extot}. The variation of one parameter from the distribution of $Y_3|Y_1$ and another from the distribution of $Y_5|Y_2$ would give a complete  sensitivity analysis since the conditioning variables are different. As an additional example, consider the complete BN of Example \ref{ex:BN} and suppose $\theta_1$ and $\theta_{122}$ are varied. Notice that $(\theta_1,\theta_2,\theta_3)\in\Delta_2$ and $(\theta_{122},\theta_{222},\theta_{322})\in\Delta_2$. This choice of parameters would be a complete analysis if every element of 
\[
(\theta_1\theta_{122},\theta_{1}\theta_{222},\theta_{1}\theta_{322},\theta_2\theta_{122},\theta_{2}\theta_{222},\theta_{2}\theta_{322},\theta_3\theta_{122},\theta_{3}\theta_{222},\theta_{3}\theta_{322})
\]
appeared in at least one monomial in Table \ref{table:monomials}. Since this is not the case, this choice of parameters does not correspond to a complete sensitivity analysis. More generally, it can be seen that for any choice of two parameters each from a different conditional distribution (possibly in the same CPT) there is no pair of $\theta$'s such that all combinations of covaried parameters appear in the same monomial.

\begin{figure}
\entrymodifiers={++[o][F-]}
\centerline{
\xymatrix{
Y_3&Y_4&Y_6\\
Y_1\ar[u]\ar[ur]&Y_2\ar[u]\ar[ru]\ar[r]&Y_5\ar[u]
}
}
\caption{A BN to illustrate fully dependent and conditionally dependent sensitivity analyses. \label{fig:extot}}
\end{figure}

\begin{comment}
\begin{figure}
\begin{center}
\scalebox{0.8}{
\xymatrixrowsep{0.05pc}{\xymatrixcolsep{4pc}\xymatrix{
&&&\bullet~v_{4}\\
&&*+[F]{\bullet~v_{1}}\ar[r]\ar[ru]\ar[rd]&\bullet~v_{5}\\
&&&\bullet~v_{6}\\\
&&&\bullet~v_{7}\\\
&\bullet~v_{0}\ar[ruuu]\ar[r]\ar[rddd]&*+[F]{\bullet~v_{2}}\ar[r]\ar[ru]\ar[rd]&\bullet~v_{8}\\
&&&\bullet~v_{9}\\
&&&\bullet~v_{10}\\
&&*+[F]{\bullet~v_3}\ar[r]\ar[ru]\ar[rd]&\bullet~v_{11}\\
&&&\bullet~v_{12}
}}}
\end{center}
\caption{Example of a staged tree to illustrate fully dependent sensitivity analyses.\label{fig:tree1}}
\end{figure}
\end{comment}

Ordered analyses imply an order over the varied parameters.  A varying parameter needs to be a probability which is conditional on the events associated to preceding varying parameters in this order. 
An example from Figure~\ref{fig:extot} illustrates this for BNs. Suppose the parameter associated to $\operatorname{P}(Y_2=y_2)$ is varied. Then in an ordered analysis any parameter from $\operatorname{P}(Y_5=y_5|Y_2=y_2)$ can be varied and, if so, also any parameter from $\operatorname{P}(Y_6=y_6|Y_5=y_5,Y_2=y_2)$. As an additional example, consider again the complete BN in Example \ref{ex:BN} and suppose the parameters $\theta_1$ and $\theta_{311}$ are varied, defining the set $V$ of varied indexes. Let $\theta_{S_1}=(\theta_1,\theta_2,\theta_3)$ and $\theta_{S_2}=(\theta_{111},\theta_{211},\theta_{311})$. For this choice of parameters  (\ref{eq:nuova}) can be written as $S_1\setminus V\cup \{V_1\times \{S_2\setminus V\}\}\cup \{V_1\times V_2\}$, where $V_i=S_i\cap V$, for $i=1,2$. The monomials indexed by $S_1\setminus V$ are $(\theta_2,\theta_3)$, those indexed by $V_1\times\{V_1\times\{S_2\setminus V\}\}$ are $(\theta_1\theta_{111},\theta_1\theta_{211})$ and those indexed by $V_1\times V_2$ are $(\theta_1\theta_{311})$. This choice of parameters corresponds to an ordered analysis since any of these monomials divides at least one monomial atomic probability in Table \ref{table:monomials}. Conversely, the choice of varied parameters $\theta_1$ and $\theta_{122}$, for instance, would not correspond to an ordered sensitivity analysis.

%%%%%%%%%%%%%%%%%%%%%%%
%%%%%%%%%%%%%%%%%%%%%%%
%%%%%%%%%%%%%%%%%%%%%%%
\section{The optimality of proportional covariation} 
\label{sec:geometric} 

After the variation of some parameters of a MM with indices in $V$ to a value $\tilde\theta_V$,  a $\tilde\theta_V$-covariation scheme needs to be applied to respect all sum-to-one conditions of the model. In this section, given $\operatorname{P}\in \textnormal{MM}(A,S)$ and its $\tilde\theta_V$-proportional covariation $\tilde{\operatorname{P}}$ we first determine a family $L$ of $Q$ densities for which the Pythagorean equality 
$
\mathcal{D}(\operatorname{Q} || \operatorname{P})=\mathcal{D}(\operatorname{Q} || \tilde{\operatorname{P}})+\mathcal{D}(\tilde{\operatorname{P}} || \operatorname{Q})
$ 
holds. This will later allow us to demonstrate for which cases a $\tilde\theta_V$-proportional covariation scheme is optimal.

\subsection{A geometric characterization of sensitivity analysis}

In the notation of Definition \ref{def:bellissima}, let $\emptyset\neq V\subset [k]$, $C=\bigcup\limits_{i\in[n]:V_i\neq\emptyset}S_i$ and $F=[k]\setminus C$. Let $\Delta^F=\bigtimes\limits_{i\in[n]:V_i=\emptyset}\Delta_{\#S_i-1}$ and $\Delta^C=\bigtimes\limits_{i\in[n]:V_i\neq\emptyset}\Delta_{\#S_i-1}$. The set $[k]$ is so partitioned into $V$, $C\setminus V$ and $F$, namely the index set of the varied, covaried and fixed parameters. A generic parameter vector can be written as $\theta=(\theta_F,\theta_{V},\theta_{C\setminus V})$ and for $\operatorname{P}\in \textnormal{MM}(A,S)$ the atomic probability of $y\in\mathbb{Y}$ can be written as $\operatorname{P}(y)=\theta_F^{A_{y,F}}\theta_V^{A_{y,V}}\theta_{C\setminus V}^{A_{y,C\setminus V}}$. For any given $\theta_F\in\Delta^F$, $\textnormal{Slice}(\theta_F)$ is the subset of densities in $\textnormal{MM}(A,S)$ for which the parameters indexed by $F$ take value $\theta_F$, namely
\[
 \operatorname{Slice}(\theta_F) 
= \left\{\operatorname{P}  \in \textnormal{MM}(A,S):\operatorname{P}(y)=\theta_F^{A_{y,F}}\theta_C^{A_{y,C}}  \text{ for all }\theta_C\in\Delta^C \text{ and } y\in\mathbb{Y}\right\},
\]
%\textcolor{red}{Thus}
where $\theta_C^{A_{y,C}}=\theta_V^{A_{y,V}}\theta_{C\setminus V}^{A_{y,C\setminus V}}$. It holds
$
\operatorname{MM}(A,S)=\bigcup\limits_{\theta_F\in\Delta^F}\operatorname{Slice}(\theta_F)
$.

\begin{example}
Consider the complete BN in Example \ref{ex:BN} with monomial atomic probabilities given in Table \ref{table:monomials}. Suppose the parameters $\theta_1$, $\theta_{11}$ and $\theta_{111}$ are varied. Then $\theta_V=(\theta_1,\theta_{11},\theta_{111})$, $\theta_{C\setminus V}=(\theta_2,\theta_3,\theta_{21},\theta_{31},\theta_{211},\theta_{311})$ and 
\[
\theta_F=(\theta_{i2},\theta_{i3},\theta_{i21},\theta_{i31},\theta_{i12},\theta_{i22},\theta_{i23},\theta_{i13},\theta_{i23},\theta_{i33})_{i\in[3]}.
\]
In $\operatorname{Slice}(\theta_F)$ the entries in $\theta_F$ are fixed to the values given in Table \ref{table:app1} and are not allowed to vary, whilst those in $\theta_V$ and $\theta_{C\setminus V}$ can vary. Atomic probabilities in $\operatorname{Slice}(\theta_F)$ are still written as $\operatorname{P}(y)=\theta_i\theta_{ji}\theta_{lji}$, for appropriate choices of the indexes $i$, $j$ and $l$.
\label{ex:sub}
\end{example}

  Theorem \ref{theo:pinco} shows that $\operatorname{P}\in\textnormal{MM}(A,S)$ and its $\tilde\theta_V$-proportional covariation density belong to the same slice.

\begin{theorem}
\label{theo:pinco}
Let $\theta=(\theta_F,\theta_V,\theta_{C\setminus V})$ be the parameter vector of $\operatorname{P}\in \textnormal{MM}(A,S)$ and let $\tilde\theta=(\tilde\theta_F,\tilde\theta_V,\tilde\theta_{C\setminus V})$ be the parameter vector of the $\tilde\theta_V$-proportional covariation of $\operatorname{P}$ called $\tilde{\operatorname{P}}$. Then $\tilde{\operatorname{P}}\in\textnormal{Slice}(\theta_F)$, that is $\tilde\theta_F=\theta_F$.
\end{theorem}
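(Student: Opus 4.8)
The plan is to unwind the definitions and observe that a proportional covariation scheme for $\theta$ acts as the identity on every block $S_i$ with $V_i=\emptyset$, and that the union of exactly these blocks is $F$.

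First I would record the purely set-theoretic fact that $F=[k]\setminus C=\bigcup_{i\in[n]:V_i=\emptyset}S_i$. This holds because $\{S_1,\dots,S_n\}$ is a partition of $[k]$ and $C=\bigcup_{i:V_i\neq\emptyset}S_i$ is the union of some of the blocks, so its complement is the union of the remaining blocks, namely those with $V_i=\emptyset$. Consequently $\theta_F$ is, up to reordering of indices, the concatenation of the subvectors $\theta_{S_i}$ over all $i$ with $V_i=\emptyset$, and likewise for $\tilde\theta_F$.

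Next I would invoke Definition~\ref{def:bellissima}: the covariation scheme $\sigma$ for $\theta$ restricts on each factor $\Delta_{\#S_i-1}$ to a $\tilde\theta_{V_i}$-covariation scheme $\sigma_i$ in the sense of Definition~\ref{def:covar}. When $V_i=\emptyset$ we have $V_i=\emptyset$, so Definition~\ref{def:covar} forces $\sigma_i$ to be the identity map on $\Delta_{\#S_i-1}$; note also that Definition~\ref{def:propcov} only prescribes proportionality of $\sigma_i$ for blocks with $V_i\neq\emptyset$, so for a proportional covariation of $\theta$ nothing else is imposed on the blocks with $V_i=\emptyset$. Hence $\tilde\theta_{S_i}=\theta_{S_i}$ for every $i$ with $V_i=\emptyset$. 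Concatenating these equalities and using the first step yields $\tilde\theta_F=\theta_F$; since $\tilde{\operatorname{P}}\in\operatorname{MM}(A,S)$ by Theorem~\ref{th:MMcovMM} and its $F$-subvector is $\theta_F$, we conclude $\tilde{\operatorname{P}}\in\operatorname{Slice}(\theta_F)$.

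I do not expect a genuine obstacle here; the argument is bookkeeping about which parameters are touched. The only point requiring care is that $F$ is defined through $C$, which aggregates entire blocks $S_i$, so that ``the parameters that are not covaried'' coincides exactly with ``the blocks on which $\sigma$ restricts to the identity''. Since a proportional covariation never mixes parameters across distinct blocks of the partition, it cannot perturb $\theta_F$, which is what makes $\operatorname{P}$ and $\tilde{\operatorname{P}}$ land in the same slice.
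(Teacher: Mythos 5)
Your argument is correct and follows essentially the same route as the paper: the paper's proof likewise reads off from Definitions~\ref{def:bellissima} and~\ref{def:propcov} that the blocks with $V_i=\emptyset$ (whose union is $F$) are acted on by the identity, so $\tilde\theta_F=\theta_F$. Your version merely spells out the bookkeeping that the paper leaves implicit.
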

\proof
The proof follows straightforward from Definition \ref{def:bellissima}. Indeed $\tilde\theta_F=\theta_F$, $\tilde\theta_V$ is given and $
\tilde\theta_{C\setminus V}=\left(\left(\frac{1-|\tilde\theta_{V_i}|}{1-|\theta_{V_i}|}\theta_j\right)_{j\in S_i}\right)_{i\in[n]}
$.
\endproof

Theorem \ref{theo:pinco} guarantees that proportional covariation schemes in MMs does not affect probabilities that customarily are not changed in sensitivity analysis, i.e. those in  Slice$(\theta_F)$.
Next, let 
\[
L_{\textnormal{sensi}}=\textnormal{Slice}(\theta_F)\cap \{P\in \textnormal{MM}(A,S): \theta_V=\tilde\theta_V\},
\]
denote the family of distributions where only the parameters $\theta_{C\setminus V}$ can vary. It follows from Theorem \ref{theo:pinco} that $\tilde{\operatorname{P}}\in L_{\textnormal{sensi}}$.

\begin{example}
In the setup of Example \ref{ex:sub}, suppose that the parameters in $\theta_V$ are varied to a new value $\tilde\theta_V$. Then, in $L_{\textnormal{sensi}}$ there are those distributions with atomic probabilities in Table \ref{table:monomials}, where $\theta_F$ are held fixed and $\theta_V$ are fixed to a new value $\tilde\theta_V$: only the parameters $\theta_{C\setminus V}$ can vary.
\end{example}

\begin{example}
Consider the setup of Example \ref{ex:ciao} where $\theta=(\theta_1,\theta_2,\theta_3)\in\Delta_2$, $V=1$ and $\tilde\theta_V=0.4$. Since this situation corresponds to a simple discrete probability distribution, Slice$(\theta_F)=\Delta_2$, i.e. there are no parameters in $\theta_F$. So Slice$(\theta_F)$ is the simplex represented in Figure \ref{fig:cov}. The space $L_{\textnormal{sensi}}$ is represented by the full line traversing the simplex, corresponding to the intersection of $\tilde\theta_1 = 0.4$ with $\Delta_2$.
\end{example}

The two examples above highlighted that $L_{\textnormal{sensi}}$ includes the model's distributions where only the parameters in $C\setminus V$, those that need to be covaried, can vary. Henceforth, we thus consider distributions in $L_{\textnormal{sensi}}$.

\subsection{A comprehensive example}
\label{sec:basta}
The following example demonstrates that given a $\operatorname{MM}(A,S)$ the choice of parameters varied affects the form of the family of densities for which the Pythagorean equality holds. Consider the following setup: a student can either fail (coded as 1), pass (coded as 2) or obtain a distinction (coded as 3) in an exam. If the student fails, she is given an additional chance where again she can either fail, pass or get a distinction. The probabilities for the two tries are assumed to be different. 

The above scenario can be modeled by a $\operatorname{MM}(A,S)$ with parameters $(\theta_1,\theta_2,\theta_3,\psi_1,\psi_2,\psi_3)$ such that $|(\theta_1,\theta_2,\theta_3)|=1$ and $|(\psi_1,\psi_2,\psi_3)|=1$, and matrix $A$  
\begin{equation}
\label{eq:sample}
\begin{tabular}{c|cccccc|c}
&$\theta_1$&$\theta_2$&$\theta_3$&$\psi_1$& $\psi_2$&$\psi_3$& $\operatorname{P}(y)$\\
\hline
$y_1$&1&0&0&1&0&0&$\theta_1\psi_1$\\
$y_2$&1&0&0&0&1&0&$\theta_1\psi_2$\\
$y_3$&1&0&0&0&0&1&$\theta_1\psi_3$\\
$y_4$&0&1&0&0&0&0&$\theta_2$\\
$y_5$&0&0&1&0&0&0&$\theta_3$
\end{tabular}
\end{equation}
where for clarity we labelled the columns and the rows with the associated parameters and events, respectively, and reported the atomic probabilities.  The parameters $\theta$'s represent the outcome of the first try, whilst the parameters $\psi$'s of the second one. Parameter vectors $\theta_V$ to be considered are $(\theta_1)$, $(\theta_2)$, $(\psi_1)$, $(\theta_1,\psi_1)$ and $(\theta_2,\psi_1)$. All other subvectors $\theta_V$ of $(\theta_i,\psi_i)_{i\in[3]}$ can be dealt with as one of the cases above by symmetry. For $\operatorname{P},\tilde{\operatorname{P}},\operatorname{Q}\in\operatorname{MM}(A,S)$, we denote with $(\theta_i,\psi_i)_{i\in [3]}$, $(\tilde\theta_i,\tilde\psi_i)_{i\in [3]}$ and $(\bar\theta_i,\bar\psi_i)_{i\in [3]}$ the parameter vectors of $\operatorname{P}$, $\tilde{\operatorname{P}}$ and $\operatorname{Q}$ respectively. In general, the I-divergence from P to Q takes the form
\[
\mathcal{D}(\operatorname{Q}||\operatorname{P})=\sum_{i = 2,3}\bar\theta_i\ln\left(\frac{\bar\theta_i}{\theta_i}\right)+\bar\theta_1\sum_{i\in[3]}\bar\psi_i\ln\left(\frac{\bar\theta_1\bar\psi_i}{\theta_1\psi_i}\right).
\]
The Pythagorean equality in equation (\ref{eq:pitagora}) can then be written as
\begin{equation}
\label{eq:stanco}
\sum_{i=2,3}\bar\theta_i\ln\left(\frac{\bar\theta_i}{\theta_i}\right)+\bar\theta_1\sum_{i\in[3]}\bar\psi_i\ln\left(\frac{\bar\theta_1\bar\psi_i}{\theta_1\psi_i}\right)-\sum_{i=2,3}\bar\theta_i\ln\left(\frac{\bar\theta_i}{\tilde\theta_i}\right)-\bar\theta_1\sum_{i\in[3]}\bar\psi_i\ln\left(\frac{\bar{\theta}_1\bar\psi_i}{\tilde\theta_1\tilde\psi_i}\right)-\sum_{i=2,3}\tilde\theta_i\ln\left(\frac{\tilde\theta_i}{\theta_i}\right)-\tilde\theta_1\sum_{i\in[3]}\tilde\psi_i\ln\left(\frac{\tilde\theta_1\tilde\psi_i}{\theta_1\psi_i}\right)=0.
\end{equation}

Next we look at the form of the above equality for each of the possible varied parameter choices. For each case, we consider only densities $\operatorname{Q}\in L_{\textnormal{sensi}}$ that are usually investigated in sensitivity analysis after a parameter variation.

\begin{enumerate}
\item For $\tilde\theta_V=\tilde\theta_1$, we consider Q such that $\operatorname{Q}\in\textnormal{Slice}(\psi_1,\psi_2,\psi_3)$ and $\bar\theta_1=\tilde\theta_1$. Then $\tilde{\operatorname{P}}$ has parameter vector $(\tilde\theta_i,\psi_i)_{i\in [3]}$, whilst Q has parameters $(\tilde\theta_1,\bar\theta_2,\bar\theta_3,\psi_1,\psi_2,\psi_3)$. Under these conditions and noticing that $\tilde\psi=\bar\psi=\psi$,  (\ref{eq:stanco}) can be simplified to
\begin{equation}
\label{eq:errore}
\sum_{i\in[3]}\bar\theta_i\ln\left(\frac{\bar\theta_i}{\theta_i}\right)-\sum_{i\in[3]}\bar\theta_i\ln\left(\frac{\bar\theta_i}{\tilde\theta_i}\right)-\sum_{i\in [3]}\tilde\theta_i\ln\left(\frac{\tilde\theta_i}{\theta_i}\right)=0.
\end{equation}
By substituting $\tilde\theta_i=\theta_i(1-\tilde\theta_1)/(1-\theta_1)$ into the logarithms, (\ref{eq:errore}) reduces to
\[
\ln\left(\frac{1-\theta_1}{1-\tilde\theta_1}\right)\sum_{i=2,3}(\tilde\theta_i-\bar\theta_i)=0,
\]
which holds for all $\operatorname{Q}\in L_{\textnormal{sensi}}$ since $\sum_{i=2,3}\bar\theta_i=\sum_{i=2,3}\tilde\theta_i=1-\tilde\theta_1$.
\item For $\tilde\theta_V=\tilde\theta_2$, we consider Q such that $\operatorname{Q}\in\textnormal{Slice}(\psi_1,\psi_2,\psi_3)$ and $\bar\theta_2=\tilde\theta_2$. Then $\tilde{\operatorname{P}}$ has parameter vector $(\tilde\theta_i,\psi_i)_{i\in [3]}$, whilst Q has parameters $(\bar\theta_1,\tilde\theta_2,\bar\theta_3,\psi_1,\psi_2,\psi_3)$. Under these conditions and noticing that $\tilde\psi=\bar\psi=\psi$, (\ref{eq:stanco}) can be written as equation (\ref{eq:errore}) which can be simplified as in the previous case to show that the equality holds for all $\operatorname{Q}\in L_{\textnormal{sensi}}$.
\item For $\tilde\theta_V=\tilde\psi_1$, we consider Q such that $\operatorname{Q}\in\textnormal{Slice}(\theta_1,\theta_2,\theta_3)$ and $\bar\psi_1=\tilde\psi_1$. Then $\tilde{\operatorname{P}}$ has parameter vector $(\theta_i,\tilde\psi_i)_{i\in [3]}$, whilst Q has parameters $(\theta_1,\theta_2,\theta_3,\tilde\psi_1,\bar\psi_2,\bar\psi_3)$. Under these conditions and noticing that $\tilde\theta=\theta$,  (\ref{eq:stanco}) can be written as
\[
\theta_1\left(\sum_{i=2,3}\bar\psi_i\ln\left(\frac{\bar\psi_i}{\psi_i}\right)-\sum_{i=2,3}\bar\psi_i\ln\left(\frac{\bar\psi_i}{\tilde\psi_i}\right)-\sum_{i=2,3}\tilde\psi_i\ln\left(\frac{\tilde\psi_i}{\psi_i}\right)\right)=0,
\]
which can be simplified as in the two previous cases to show that the equality holds for all $\operatorname{Q}\in L_{\textnormal{sensi}}$.
\item  For $\tilde\theta_V=(\tilde\theta_1,\tilde\psi_1)$, we consider Q such that $\bar\theta_1=\tilde\theta_1$ and $\bar\psi_1=\tilde\psi_1$, since there are no parameters with index in $F$.  Then $\tilde{\operatorname{P}}$ has parameter vector $(\tilde\theta_i,\tilde\psi_i)_{i\in [3]}$, whilst Q has parameters $(\tilde\theta_1,\bar\theta_2,\bar\theta_3,\tilde\psi_1,\bar\psi_2,\bar\psi_3)$. Under these conditions,  (\ref{eq:stanco}) can be written as
\[
\left(\sum_{i=2,3}\bar\theta_i\ln\left(\frac{\bar\theta_i}{\theta_i}\right)-\bar\theta_i\ln\left(\frac{\bar\theta_i}{\tilde\theta_i}\right)-\tilde\theta_i\ln\left(\frac{\tilde\theta_i}{\theta_i}\right)\right)+\theta_1\left(\sum_{i=2,3}\bar\psi_i\ln\left(\frac{\bar\theta_1\bar\psi_i}{\theta_1\psi_i}\right)-\sum_{i=2,3}\bar\psi_i\ln\left(\frac{\bar\theta_1\bar\psi_i}{\tilde\theta_1\tilde\psi_i}\right)-\sum_{i=2,3}\tilde\psi_i\ln\left(\frac{\tilde\theta_1\tilde\psi_i}{\theta_1\psi_i}\right)\right)=0.
\]
The above equation can be simplified by combining the steps used in the previous cases to show that the equality holds for all $\operatorname{Q}\in L_{\textnormal{sensi}}$.
\item For $\tilde\theta_V=(\tilde\theta_2,\tilde\psi_1)$, we consider Q such that $\bar\theta_2=\tilde\theta_2$ and $\bar\psi_1=\tilde\psi_1$, since there are no parameters with index in $F$. Then $\tilde{\operatorname{P}}$ has parameter vector $(\tilde\theta_i,\tilde\psi_i)_{i\in [3]}$, whilst Q has parameters $(\bar\theta_1,\tilde\theta_2,\bar\theta_3,\tilde\psi_1,\bar\psi_2,\bar\psi_3)$. Under these conditions and using similar steps to the previous points,  (\ref{eq:stanco}) can be written as
\begin{equation}
\label{eq:stanco1}
\ln\left(\frac{1-\tilde\theta_2}{1-\theta_2}\right)(\bar\theta_3-\tilde\theta_3)+ \tilde\psi_1\ln\left(\frac{\tilde\psi_1}{\psi_1}\frac{1-\tilde\theta_2}{1-\theta_2}\right)(\bar\theta_1-\tilde\theta_1) + (1-\tilde\psi_1)\ln\left(\frac{1-\tilde\psi_1}{1-\psi_1}\frac{1-\tilde\theta_2}{1-\theta_2}\right)(\bar\theta_1-\tilde\theta_1) = 0.
\end{equation}
Although the terms in (\ref{eq:stanco}) can be re-arranged differently, the equality cannot be derived and  consequently for the choice $\tilde\theta_V=(\tilde\theta_1,\tilde\psi_1)$ the family of distributions for which the Pythagorean identity holds is restricted.
\end{enumerate}
Notice that the first four choices of parameters corresponded to the analyses introduced in Definition \ref{def:multi}, namely simple in the first three cases and ordered in the fourth, whilst the last choice of parameters does not correspond to any of the newly introduced sensitivity analyses.

Suppose we now refine the model definition with the following probability specifications: $\theta_1=0.2$, $\theta_2=0.5$, $\theta_3=0.3$, $\psi_1=0.4$, $\psi_2=0.4$ and $\psi_3=0.2$. Next consider the choices of parameters varied in points 4 and 5 above. First suppose that $\tilde\theta_1=0.4$ and $\tilde\psi_1=0.2$. For this choice of varied parameters we showed that the Pythagorean identity holds for all $\operatorname{Q}\in L_{\textnormal{sensi}}$. In this case proportional covariation minimizes the I-divergence between the original and the varied distribution, as reported in Figure \ref{fig:ex1}. Consider now case 5  and suppose $\tilde\theta_2=0.3$ and $\tilde\psi_1=0.2$. We showed that the Pythagorean identity does not hold for all $\operatorname{Q}\in L_{\textnormal{sensi}}$. The identity holds in the restricted family of distributions characterized by  (\ref{eq:stanco1}). In this case the I-divergence is not minimized by proportional covariation as reported in Figure \ref{fig:ex2}.

\begin{figure}
    \centering
    \begin{minipage}{.46\textwidth}
        \centering
        \includegraphics[scale=0.35]{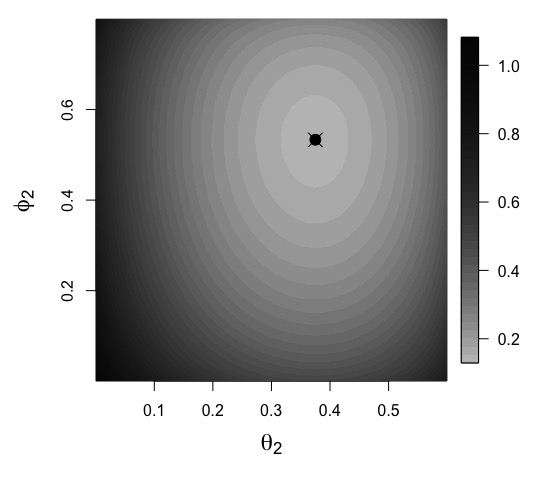}
        \vspace{-0.25cm}
        \caption{I-divergence in case 4 of Section \ref{sec:basta}. The dot represents the minimum  I-divergence and coincides with proportional covariation (star). }
        \label{fig:ex1}
    \end{minipage}%
    \hspace{0.5cm}
    \begin{minipage}{0.46\textwidth}
        \centering
        \includegraphics[scale=0.35]{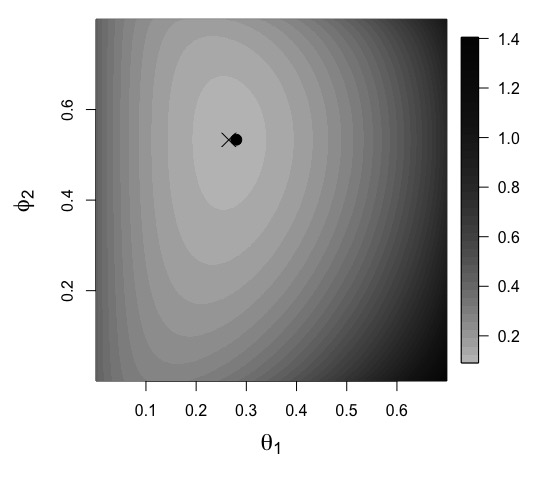}
        \vspace{-0.25cm}
        \caption{I-divergence in case 5 of Section \ref{sec:basta}. The star represents the minimum I-divergence and the dot represents proportional covariation.}
        \label{fig:ex2}
    \end{minipage}
\end{figure}

\subsection{Proportional covariation and I-projections}

Given a $\operatorname{P}\in \textnormal{MM}(A,S)$ and its $\tilde\theta_V$-proportional covariation $\tilde{\operatorname{P}}$, Theorem \ref{theo:pincopallo} identifies the set of distributions  $\textnormal{Q}\in L_{\textnormal{sensi}}$ that satisfy  the Pythagorean identity in (\ref{eq:pitagora}). This is the first step towards the proof of optimality of proportional covariation. For $\emptyset\neq H\subseteq C$ define
\[
\mathbb{Y}_H=\left\{y\in\mathbb{Y}: A_{y,i}=1 \textnormal{ for all } i\in H \textnormal{ and } A_{y,i}=0 \textnormal{ for all } i\in C\setminus H \right\}
\]
and consider those $H$ for which $\mathbb{Y}_H\neq\emptyset.$ These sets $H$ can be determined once the model  $\textnormal{MM}(A,S)$ and the indexes of parameters to be varied, $V$, are known.

For $y\in \mathbb{Y}$, with a slight abuse of notation, if $A_{y,i}=1$ for all $i\in B\subseteq [k]$  we write $\theta_B^{A_{y,B}}=\prod_{i\in B}\theta_i^{A_{y,i}}=\theta_B$. Thus the symbol $\theta_B$ might indicate the vector $(\theta_{i})_{i\in B}$ and the square-free monomial $\prod_{i\in B}\theta_i$. The context clarifies which interpretation applies. In particular for all $y\in\mathbb{Y}_H$ and $B\subseteq H$ we write $\theta_B^{A_{y,B}}=\theta_B$ and $\operatorname{P}(y)=\theta_F^{A_{y,F}}\theta_H$ for any $\operatorname{P}\in\textnormal{MM}(A,S)$.

\begin{theorem}
\label{theo:pincopallo}
Let $\emptyset\neq V\subset [k]$, $\operatorname{P}\in \textnormal{MM}(A,S)$, with parameter vector $(\theta_F,\theta_V,\theta_{C\setminus V})$,  and $\tilde{\operatorname{P}}$ the $\tilde\theta_V$-proportional covariation of P with parameter $\tilde\theta=(\theta_F,\tilde\theta_V,\tilde\theta_{C\setminus V})$. The density $\operatorname{Q}\in L_{\textnormal{sensi}}$ with parameter $\bar{\theta}=(\theta_F,\bar\theta_V,\bar\theta_{C\setminus V})$ satisfies 
\begin{equation}
\label{eq:cond}
\sum_{H\subseteq C,H\neq \emptyset}\tilde\theta_{V\cap H}(\bar\theta_{\{C\setminus V\}\cap H}-\tilde\theta_{\{C\setminus V\}\cap H})\ln\left(\alpha\frac{\tilde\theta_{V\cap H}}{\theta_{V\cap H}}\right)\sum_{y\in\mathbb{Y}_H}\theta_{F}^{A_{y,F}}=0,
\end{equation}
where $\alpha=\prod\limits_{i\in[n]: S_i\cap H \neq \emptyset}\prod\limits_{j\in \{ S_i\setminus V\}\cap H} (1-|\tilde\theta_{V_i}|)/(1-|\theta_{V_i}|)$, if and only if
$
\mathcal{D}(\operatorname{Q}||\operatorname{P})=\mathcal{D}(\operatorname{Q}||\tilde{\operatorname{P}})+\mathcal{D}(\tilde{\operatorname{P}}||\operatorname{Q}).
$
\end{theorem}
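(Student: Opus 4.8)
The plan is to prove the stated equivalence by computing the \emph{defect} of the displayed Pythagorean identity and showing that it coincides with the left-hand side of (\ref{eq:cond}), so that the identity holds exactly when the defect vanishes. Before doing so I address the last term $\mathcal{D}(\tilde{\operatorname{P}}||\operatorname{Q})$: since both $\tilde{\operatorname{P}}$ and $\operatorname{Q}$ lie in $L_{\textnormal{sensi}}$, this term would vary with $\operatorname{Q}$ and so could not serve as the constant increment that, by Theorem~\ref{theo:new}, certifies $\tilde{\operatorname{P}}$ as the I-projection; moreover it disagrees with the increment actually subtracted in the general identity (\ref{eq:pitagora}) and in the explicit computation (\ref{eq:stanco}) of Example~\ref{ex:nuovo}. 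The intended third term is therefore $\mathcal{D}(\tilde{\operatorname{P}}||\operatorname{P})$, which I adopt, setting
\[
\Delta(\operatorname{Q}) = \mathcal{D}(\operatorname{Q}||\operatorname{P}) - \mathcal{D}(\operatorname{Q}||\tilde{\operatorname{P}}) - \mathcal{D}(\tilde{\operatorname{P}}||\operatorname{P}).
\]

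The first step is a purely algebraic, three-distribution identity. Expanding the three divergences via Definition~\ref{def:Idiv} and cancelling the $\operatorname{Q}(y)\ln\operatorname{Q}(y)$ contributions gives the ``inner-product'' form
\[
\Delta(\operatorname{Q}) = \sum_{y\in\mathbb{Y}}\bigl(\operatorname{Q}(y)-\tilde{\operatorname{P}}(y)\bigr)\ln\frac{\tilde{\operatorname{P}}(y)}{\operatorname{P}(y)}.
\]
The virtue of this form is that the logarithm linearises over the monomial structure: for a $\operatorname{MM}(A,S)$ one has $\ln(\tilde{\operatorname{P}}(y)/\operatorname{P}(y)) = \sum_{i\in[k]}A_{y,i}\ln(\tilde\theta_i/\theta_i)$. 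By Theorem~\ref{theo:pinco} the fixed block satisfies $\tilde\theta_F=\theta_F$, so every index $i\in F$ contributes $\ln 1 = 0$ and only the indices $i\in C$ survive.

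The second step groups the atoms by the sets $\mathbb{Y}_H$. For $y\in\mathbb{Y}_H$ the surviving inner sum collapses to $\sum_{i\in H}\ln(\tilde\theta_i/\theta_i)$, which is \emph{constant} over $\mathbb{Y}_H$; splitting $H$ into $V\cap H$ and $\{C\setminus V\}\cap H$ and inserting the proportional rule (Definition~\ref{def:propcov}) on the covaried coordinates rewrites this constant as $\ln\bigl(\alpha\,\tilde\theta_{V\cap H}/\theta_{V\cap H}\bigr)$, with $\alpha$ exactly the product of residual-mass ratios in the statement. In parallel, using $\operatorname{Q},\tilde{\operatorname{P}}\in L_{\textnormal{sensi}}$ (so $\bar\theta_V=\tilde\theta_V$ and $\bar\theta_F=\theta_F$) and the square-free monomial convention for $\theta_B$, the atomic difference factorises for $y\in\mathbb{Y}_H$ as $\operatorname{Q}(y)-\tilde{\operatorname{P}}(y)=\theta_F^{A_{y,F}}\,\tilde\theta_{V\cap H}\bigl(\bar\theta_{\{C\setminus V\}\cap H}-\tilde\theta_{\{C\setminus V\}\cap H}\bigr)$. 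Summing over $y\in\mathbb{Y}_H$ and then over the nonempty $H\subseteq C$ reproduces (\ref{eq:cond}) term by term, so $\Delta(\operatorname{Q})$ equals the left-hand side of (\ref{eq:cond}) and the equivalence follows.

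The main obstacle is the bookkeeping in the second step: verifying that the per-atom logarithmic weight depends only on $H$ and not on the individual $y\in\mathbb{Y}_H$, and that the block-by-block application of proportional covariation assembles precisely into the single factor $\alpha$. This is where the product structure $\prod_{i\in[n]}\Delta_{\#S_i-1}$ and regularity enter, guaranteeing that within each block $S_i$ the residual-mass ratio $(1-|\tilde\theta_{V_i}|)/(1-|\theta_{V_i}|)$ is shared by every covaried coordinate of that block. The remaining manipulations — the cancellation producing the inner-product form and the factorisation of $\operatorname{Q}(y)-\tilde{\operatorname{P}}(y)$ — are routine once the conventions for $\theta_B$ are fixed.
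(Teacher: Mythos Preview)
Your argument is correct and follows essentially the same route as the paper's proof: both decompose the sample space into the disjoint sets $\mathbb{Y}_H$ for $\emptyset\neq H\subseteq C$, exploit the multilinear monomial structure so that the logarithmic terms depend only on $H$, and then insert the proportional-covariation formula to assemble the factor $\alpha$. Your identification of the typo $\mathcal{D}(\tilde{\operatorname{P}}||\operatorname{Q})\rightsquigarrow\mathcal{D}(\tilde{\operatorname{P}}||\operatorname{P})$ is also what the paper uses throughout.

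The one organisational difference is your first step: you combine the three divergences up front into the inner-product form $\Delta(\operatorname{Q})=\sum_{y}(\operatorname{Q}(y)-\tilde{\operatorname{P}}(y))\ln(\tilde{\operatorname{P}}(y)/\operatorname{P}(y))$, whereas the paper computes $\mathcal{D}(\operatorname{Q}||\operatorname{P})$, $\mathcal{D}(\operatorname{Q}||\tilde{\operatorname{P}})$ and $\mathcal{D}(\tilde{\operatorname{P}}||\operatorname{P})$ separately (its equations (\ref{eq:4}), (\ref{eq:8}), (\ref{eq:9})) and only recombines them at the end. Your route is a bit shorter and makes immediately transparent why only the indices in $C$ matter; the paper's route gives explicit closed forms for each divergence along the way. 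Either way, the substance is the same.
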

The proof is in \ref{sec:prooftheo}. The first condition that the density Q need to respect, i.e. belonging to Slice$(\theta_F)$ and having probabilities $\bar\theta_V=\tilde\theta_V$, is standard and  commonly made in sensitivity analysis. The second condition given in  (\ref{eq:cond}) has one term only depending on the density Q, namely $\bar\theta_{\{C\setminus V\}\cap H}$, whilst all others can be straightforwardly derived once $\tilde\theta_V$ and $\operatorname{P}\in\textnormal{MM}(A,S)$ are given.

\begin{example}
\label{ex:bestia}
Consider the setup of Section \ref{sec:basta}, where it was showed that for the variation of the parameter $\theta_1$ the Pythagorean identity holds. Conversely, for a variation of parameters $\theta_2$ and $\psi_1$ it did not hold. Therefore, the equality in  \ref{eq:cond} must hold in the first case, but not in the second. We next check this is indeed the case.

When $\theta_1$ is varied, the set $C$ of (co)varied parameters is such that $\theta_C=(\theta_1,\theta_2,\theta_3)$. The summation in (\ref{eq:cond}) considers all subsets $H$ of  $C$ such that $\mathbb{Y}_H$ is non-empty: this is the case only for the indexes of $\theta_2$ and $\theta_3$. So (\ref{eq:cond}) can be written as
\begin{equation}
\label{eq:mobasta}
1\cdot (\bar\theta_2-\tilde\theta_2)\ln\left(1\cdot \frac{1-\tilde\theta_1}{1-\theta_1}\right)\cdot  1 + 1\cdot (\bar\theta_2-\tilde\theta_2)\ln\left(1\cdot \frac{1-\tilde\theta_1}{1-\theta_1}\right)\cdot 1.
\end{equation}
Since  $\bar\theta_2+\bar\theta_3=\tilde\theta_2+\tilde\theta_3=1-\tilde\theta_1$, (\ref{eq:mobasta}) is equal to zero, as expected.

When $\theta_2$ and $\psi_1$ are varied, the set $C$ of (co)varied parameters includes all model's parameters and the summation in (\ref{eq:cond}) considers all subsets $H$ of $C$ coinciding with the sample space given in (\ref{eq:sample}). So (\ref{eq:cond}) can be written as the sum of the expressions on the rhs of Table \ref{table:bestia}. It can be noticed that this sum is equal to \ref{eq:stanco1}: the first row of Table \ref{table:bestia} is zero, the second row is equal to the first term in (\ref{eq:stanco1}), the third row is equal to the second term in (\ref{eq:stanco1}) and the sum of the fourth and fifth rows is equal to the third term in (\ref{eq:stanco1}) since $\bar\psi_2+\bar\psi_3=\tilde\psi_2+\tilde\psi_3=1-\tilde\psi_1$. As already noticed, for this choice of parameters the Pythagorean identity does not hold for all distributions in $L_{\textnormal{sensi}}$.

\begin{table}
\centering
\def\arraystretch{2}
\begin{tabular}{|c|c|}
\hline
$\theta_2$&$\displaystyle\tilde\theta_2(1-1)\ln\left(1\cdot \frac{\tilde\theta_2}{\theta_2}\right)\cdot 1$\\
\hline
$\theta_3$& $\displaystyle1\cdot (\bar\theta_3-\tilde\theta_3)\ln\left(\frac{1-\tilde\theta_2}{1-\theta_2}\cdot\frac{1}{1}\right)\cdot 1$\\
\hline
$\theta_1\psi_1$&$\displaystyle\tilde\psi_1(\bar\theta_1-\tilde\theta_1)\ln\left(\frac{\tilde\psi_1}{\psi_1}\frac{1-\tilde\theta_2}{1-\theta_2}\right)\cdot 1$\\
\hline
$\theta_1\psi_2$& $\displaystyle1\cdot(\bar\theta_1\bar\psi_2-\tilde\theta_1\tilde\psi_2)\ln\left(\frac{1-\tilde\psi_1}{1-\psi_1}\frac{1-\tilde\theta_2}{1-\theta_2}\right)\cdot 1$\\
\hline
$\theta_1\psi_3$&$\displaystyle1\cdot(\bar\theta_1\bar\psi_3-\tilde\theta_1\tilde\psi_3)\ln\left(\frac{1-\tilde\psi_1}{1-\psi_1}\frac{1-\tilde\theta_2}{1-\theta_2}\right)\cdot 1$\\
\hline
\end{tabular}
\caption{Elements of the sample space (left column) and contribution to (\ref{eq:cond}) (right column) for the setup in Example \ref{ex:bestia} when $\theta_2$ and $\psi_1$ are varied. \label{table:bestia}}
\end{table}
\end{example}

\begin{corollary}
In the notation of Theorem \ref{theo:pincopallo}, the density $\operatorname{Q}\in L_{\textnormal{sensi}}$ satisfies 
\begin{equation}
\label{eq:cond2}
\sum_{H\subseteq C,H\neq \emptyset}\tilde\theta_{V\cap H}(\bar\theta_{\{C\setminus V\}\cap H}-\tilde\theta_{\{C\setminus V\}\cap H})\ln\left(\alpha\frac{\tilde\theta_{V\cap H}}{\theta_{V\cap H}}\right)\sum_{y\in\mathbb{Y}_H}\theta_{F}^{A_{y,F}} \geq 0,
\end{equation}
 if and only if
$
\mathcal{D}(\operatorname{Q}||\operatorname{P})\geq \mathcal{D}(\operatorname{Q}||\tilde{\operatorname{P}})+\mathcal{D}(\tilde{\operatorname{P}}||\operatorname{Q}).$
\end{corollary}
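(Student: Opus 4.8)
The plan is to notice that the corollary is not really a new result but the inequality form of an identity that is already produced inside the proof of Theorem~\ref{theo:pincopallo}. In that proof one expands each of the three I-divergences as a sum over the atoms $y\in\mathbb{Y}$, partitions $\mathbb{Y}$ according to the unique $H\subseteq C$ with $y\in\mathbb{Y}_H$, substitutes the proportional-covariation relations $\tilde\theta_j=\frac{1-|\tilde\theta_{V_i}|}{1-|\theta_{V_i}|}\,\theta_j$ for $j\in S_i\setminus V$ of Definition~\ref{def:propcov}, and cancels every contribution that does not involve the varied block, using that $\operatorname{Q}\in L_{\textnormal{sensi}}$ forces $\bar\theta_F=\theta_F$ and $\bar\theta_V=\tilde\theta_V$. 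The outcome of that chain of equalities is the identity
\[
\mathcal{D}(\operatorname{Q}||\operatorname{P})-\mathcal{D}(\operatorname{Q}||\tilde{\operatorname{P}})-\mathcal{D}(\tilde{\operatorname{P}}||\operatorname{Q})=\sum_{H\subseteq C,\,H\neq\emptyset}\tilde\theta_{V\cap H}\bigl(\bar\theta_{\{C\setminus V\}\cap H}-\tilde\theta_{\{C\setminus V\}\cap H}\bigr)\ln\!\left(\alpha\,\frac{\tilde\theta_{V\cap H}}{\theta_{V\cap H}}\right)\sum_{y\in\mathbb{Y}_H}\theta_F^{A_{y,F}},
\]
valid for every $\operatorname{Q}\in L_{\textnormal{sensi}}$, whose right-hand side is exactly the expression in~(\ref{eq:cond}) and~(\ref{eq:cond2}); setting it equal to $0$ recovers Theorem~\ref{theo:pincopallo}. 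So the first step is simply to record this identity explicitly (extracting it from the computation behind Theorem~\ref{theo:pincopallo}, every step of which is an equality).

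Granting the identity, the corollary is immediate. Since $\operatorname{P}$, $\tilde{\operatorname{P}}$ and $\operatorname{Q}$ are strictly positive densities in $\textnormal{MM}(A,S)$, all three divergences are finite, so both sides of the displayed identity are finite real numbers with the same sign. Hence $\mathcal{D}(\operatorname{Q}||\operatorname{P})-\mathcal{D}(\operatorname{Q}||\tilde{\operatorname{P}})-\mathcal{D}(\tilde{\operatorname{P}}||\operatorname{Q})\geq 0$ if and only if the left-hand side of~(\ref{eq:cond2}) is nonnegative, and rearranging the first inequality gives $\mathcal{D}(\operatorname{Q}||\operatorname{P})\geq\mathcal{D}(\operatorname{Q}||\tilde{\operatorname{P}})+\mathcal{D}(\tilde{\operatorname{P}}||\operatorname{Q})$. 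In other words the biconditional of Theorem~\ref{theo:pincopallo}, with ``$=0$'' relaxed to ``$\geq 0$'' on both sides, passes through verbatim because it is the single displayed identity above read as an inequality.

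I expect essentially no obstacle here: the only point that needs a moment's care is to make sure one genuinely has the identity rather than just the ``iff'' coming out of Theorem~\ref{theo:pincopallo}; if that proof is phrased only as an equivalence, one inspects its computation and reads off the identity, which costs nothing. As a sanity check, in an independent, fully dependent or conditionally dependent analysis the factors $\bar\theta_{\{C\setminus V\}\cap H}-\tilde\theta_{\{C\setminus V\}\cap H}$ sum away (exactly as in the cases of Example~\ref{ex:nuovo}), so both~(\ref{eq:cond}) and~(\ref{eq:cond2}) hold trivially and, by Theorem~\ref{theo:new} together with~(\ref{eq:pitagora}), $\tilde{\operatorname{P}}$ is the I-projection of $\operatorname{P}$ onto $L_{\textnormal{sensi}}$.
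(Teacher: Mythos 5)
Your proposal is correct and is essentially the paper's own argument: the paper proves this corollary in one line by ``substituting the equalities in the proof of Theorem \ref{theo:pincopallo} with inequalities,'' and your explicit identity $\mathcal{D}(\operatorname{Q}||\operatorname{P})-\mathcal{D}(\operatorname{Q}||\tilde{\operatorname{P}})-\mathcal{D}(\tilde{\operatorname{P}}||\operatorname{P})=$ left-hand side of (\ref{eq:cond2}), valid for all $\operatorname{Q}\in L_{\textnormal{sensi}}$, is exactly what that chain of equalities (equations (\ref{eq:4}), (\ref{eq:8}), (\ref{eq:9}) through (\ref{eq:last})) delivers, so reading it as an inequality in either direction gives the biconditional. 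The only blemish, inherited verbatim from the statement and from Theorem \ref{theo:pincopallo} itself, is that the last divergence should be $\mathcal{D}(\tilde{\operatorname{P}}||\operatorname{P})$ rather than $\mathcal{D}(\tilde{\operatorname{P}}||\operatorname{Q})$, consistently with the Pythagorean identity (\ref{eq:pitagora}) and with equation (\ref{eq:6}) in the appendix; your written identity carries this typo along but your described computation is the correct one.
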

This result follows by substituting the equalities in the proof of Theorem \ref{theo:pincopallo} with inequalities.

 Since for $\tilde{\operatorname{P}}$, $\operatorname{P}$ and all the distributions $\operatorname{Q}$ characterized by  (\ref{eq:cond2}) the Pythagorean inequality holds, then it can be proven that $\tilde{\operatorname{P}}$ is the I-projection of $\operatorname{P}$ into this well-specified family of distributions. Let
\[
L_{\textnormal{costr}}=L_{\textnormal{sensi}}\cap \left\{Q\in\textnormal{MM}(A,S):\sum_{H\subseteq C,H\neq \emptyset}\tilde\theta_{V\cap H}(\bar\theta_{\{C\setminus V\}\cap H}-\tilde\theta_{\{C\setminus V\}\cap H})\ln\left(\alpha\frac{\tilde\theta_{V\cap H}}{\theta_{V\cap H}}\right)\sum_{y\in\mathbb{Y}_H}\theta_{F}^{A_{y,F}}\geq 0\right\}.
\]
\begin{corollary}
\label{cor:bo}
In the notation of Theorem \ref{theo:pincopallo}, $\tilde{\operatorname{P}}$ is the I-projection of $\operatorname{P}$ in the set $L_{\textnormal{constr}}$.
\end{corollary}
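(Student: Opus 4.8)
The plan is to obtain this as a direct consequence of the corollary preceding it (the one built around inequality~(\ref{eq:cond2})) together with the non-negativity of the I-divergence, \emph{without} re-invoking the general existence theory for I-projections. The point is that $L_{\textnormal{constr}}$ is carved out of $\textnormal{MM}(A,S)$, which is an algebraic variety and in general not convex, so the closed-convex machinery recalled in Section~\ref{sec:proj} is not directly available; instead one exhibits $\tilde{\operatorname{P}}$ explicitly as the minimiser. First I would verify that $\tilde{\operatorname{P}}\in L_{\textnormal{constr}}$. By Theorem~\ref{theo:pinco}, $\tilde\theta_F=\theta_F$, so $\tilde{\operatorname{P}}\in\textnormal{Slice}(\theta_F)$; and by the definition of the $\tilde\theta_V$-proportional covariation the varied subvector of $\tilde{\operatorname{P}}$ is exactly $\tilde\theta_V$, hence $\tilde{\operatorname{P}}\in L_{\textnormal{sensi}}$. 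Substituting $\operatorname{Q}=\tilde{\operatorname{P}}$, i.e.\ $\bar\theta=\tilde\theta$, into the inequality defining $L_{\textnormal{constr}}$, each summand carries the factor $\bar\theta_{\{C\setminus V\}\cap H}-\tilde\theta_{\{C\setminus V\}\cap H}$, which vanishes for every $H\subseteq C$; thus the left-hand side equals $0$, the inequality $\geq 0$ holds, and $\tilde{\operatorname{P}}\in L_{\textnormal{constr}}$.

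Next I would apply the corollary preceding Corollary~\ref{cor:bo}: since every $\operatorname{Q}\in L_{\textnormal{constr}}$ is, by construction, a density in $L_{\textnormal{sensi}}$ satisfying~(\ref{eq:cond2}), it obeys the Pythagorean inequality
\[
\mathcal{D}(\operatorname{Q}||\operatorname{P})\;\geq\;\mathcal{D}(\operatorname{Q}||\tilde{\operatorname{P}})+\mathcal{D}(\tilde{\operatorname{P}}||\operatorname{P}).
\]
Because $\mathcal{D}(\operatorname{Q}||\tilde{\operatorname{P}})\geq 0$ for strictly positive densities, with equality if and only if $\operatorname{Q}=\tilde{\operatorname{P}}$, this yields $\mathcal{D}(\operatorname{Q}||\operatorname{P})\geq \mathcal{D}(\tilde{\operatorname{P}}||\operatorname{P})$ for all $\operatorname{Q}\in L_{\textnormal{constr}}$, with equality precisely at $\operatorname{Q}=\tilde{\operatorname{P}}$.

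Finally, combining the two steps: $\tilde{\operatorname{P}}\in L_{\textnormal{constr}}$ realises the value $\mathcal{D}(\tilde{\operatorname{P}}||\operatorname{P})$, which the display above shows to be a lower bound for $\mathcal{D}(\cdot||\operatorname{P})$ on $L_{\textnormal{constr}}$ that is attained only at $\tilde{\operatorname{P}}$; hence $\tilde{\operatorname{P}}$ is the unique minimiser of $\mathcal{D}(\operatorname{Q}||\operatorname{P})$ over $\operatorname{Q}\in L_{\textnormal{constr}}$, i.e.\ the I-projection of $\operatorname{P}$ in $L_{\textnormal{constr}}$. The only genuinely delicate point is the one noted at the outset — one must not appeal to the existence/uniqueness statement for projections onto closed convex sets, since $L_{\textnormal{constr}}$ need not be convex; the whole content is instead squeezed out constructively from Theorem~\ref{theo:pincopallo} and its corollary, everything else being routine.
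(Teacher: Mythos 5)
Your proposal is correct, and it reaches the conclusion by a genuinely different (and more self-contained) route than the paper. The paper's proof first passes to $\bar{L}$, the smallest closed convex subset of $\Delta_{q-1}$ containing $L_{\textnormal{constr}}$, invokes the existence and uniqueness of the I-projection $\operatorname{P}^*$ onto $\bar L$ together with Theorem~\ref{theo:new}, and then identifies $\tilde{\operatorname{P}}=\operatorname{P}^*$ via the Pythagorean relation; this keeps the statement formally consistent with the paper's definition of I-projection (which presumes a closed convex $L$), but the final identification step is terse, since the Pythagorean relation is only established for densities in $L_{\textnormal{constr}}$ rather than for all of $\bar L$. You instead bypass the convex-hull construction entirely: you check $\tilde{\operatorname{P}}\in L_{\textnormal{constr}}$ (via Theorem~\ref{theo:pinco} and the vanishing of the factors $\bar\theta_{\{C\setminus V\}\cap H}-\tilde\theta_{\{C\setminus V\}\cap H}$ in~(\ref{eq:cond2}) when $\bar\theta=\tilde\theta$), then combine the preceding corollary with $\mathcal D(\operatorname{Q}||\tilde{\operatorname{P}})\geq 0$ to get $\mathcal D(\operatorname{Q}||\operatorname{P})\geq \mathcal D(\tilde{\operatorname{P}}||\operatorname{P})$ on $L_{\textnormal{constr}}$, with equality forcing $\mathcal D(\operatorname{Q}||\tilde{\operatorname{P}})=0$ and hence $\operatorname{Q}=\tilde{\operatorname{P}}$. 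Your concern about non-convexity of $L_{\textnormal{constr}}$ is well taken; your argument handles it by exhibiting the minimiser directly (implicitly reading \lq\lq{}I-projection\rq\rq{} as the divergence minimiser within the set, which is the intended meaning here), and in addition delivers uniqueness of the minimiser explicitly, whereas the paper's route buys a link to the general closed-convex theory of Section~\ref{sec:proj} at the cost of the slightly delicate passage through $\bar L$.
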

\begin{proof}
Let $\bar{L}$ be the smallest convex and closed subset of $\Delta_{q-1}$ which includes $L_{\textnormal{constr}}$. From Section \ref{sec:proj}, there exists a unique $\operatorname{P}^*\in\bar{L}$ such that   $\mathcal D(\operatorname{Q} || \operatorname{P}) \geq \mathcal D(\operatorname{Q} || \operatorname{P}^\ast )  + \mathcal D(\operatorname{P}^\ast || \operatorname{P})$. But since $\tilde{\operatorname{P}}\in L_{\textnormal{constr}}\subseteq\bar{L}$ satisfies the Pythagorean identity then $\tilde{\operatorname{P}}=\operatorname{P}^*$.
\end{proof}

\citet{Csiszar2004} proved that the I-projection satisfies the Pythagorean identity using the fact that $L$ is closed and convex. Here we took a different approach by taking advantage of the specific monomial form of the statistical models we study. By characterizing the class of distributions for which the Pythagorean identity holds, we have then been able to prove that proportional covariation is the I-projection within this family.

Although Corollary \ref{cor:bo} demonstrates that proportional covariation minimizes the I-divergence between the original distribution and those in the set $L_{\textnormal{constr}}$, it does not provide information on whether $L_{\textnormal{constr}}$ includes all distributions of interest in sensitivity analysis or not. More explicitly, Corollary \ref{cor:bo} does not specify whether, given $\operatorname{P}\in\textnormal{MM}(A,S)$ and $\tilde\theta_V$, $\tilde{\operatorname{P}}$ is the I-projection of $\operatorname{P}$ in $L_{\textnormal{sensi}}$. This is the case if and only if $L_{\textnormal{sensi}}=L_{\textnormal{constr}}$, i.e.  if for all $\textnormal{Q}\in L_{\textnormal{sensi}}$ the condition in (\ref{eq:cond2}) holds.
Theorem \ref{theo:final} below states that for the multi-way analyses in Definition \ref{def:multi}, proportional covariation is indeed the I-projection of the original distribution in the set of all distributions usually considered in sensitivity analysis. Namely for such analyses $L_{\textnormal{sensi}}=L_{\textnormal{constr}}$.

\begin{theorem}
\label{theo:final}
In the notation of Theorem \ref{theo:pincopallo}, if $\tilde\theta_V$ is chosen according to a simple, complete or ordered sensitivity analysis, then $\tilde{\operatorname{P}}$ is the I-projection of $\operatorname{P}$ in $L_{\textnormal{sensi}}$.
\end{theorem}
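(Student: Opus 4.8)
The plan is to deduce the statement from the characterisation in Theorem~\ref{theo:pincopallo} together with Corollary~\ref{cor:bo}. Since $L_{\textnormal{constr}}\subseteq L_{\textnormal{sensi}}$ always holds by construction and, by Corollary~\ref{cor:bo}, $\tilde{\operatorname{P}}$ is the I-projection of $\operatorname{P}$ in $L_{\textnormal{constr}}$, it suffices to show that for each of the three families of Definition~\ref{def:multi} one has $L_{\textnormal{sensi}}=L_{\textnormal{constr}}$. I would in fact prove the sharper assertion that, for these choices of $V$, the left-hand side of~(\ref{eq:cond}) vanishes for \emph{every} $\operatorname{Q}\in L_{\textnormal{sensi}}$; the constraint cutting out $L_{\textnormal{constr}}$ from $L_{\textnormal{sensi}}$ is then automatically satisfied, so the two sets coincide. (Interpreted through Theorem~\ref{theo:pincopallo}, this says that the Pythagorean identity~(\ref{eq:pitagora}) holds for all $\operatorname{Q}\in L_{\textnormal{sensi}}$; since $\tilde{\operatorname{P}}\in L_{\textnormal{sensi}}$ by Theorem~\ref{theo:pinco}, $\tilde{\operatorname{P}}$ is the I-projection by the same argument as in Corollary~\ref{cor:bo}.)

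The heart of the proof is thus a structural description of the index set $\{H\subseteq C:H\neq\emptyset,\ \mathbb{Y}_H\neq\emptyset\}$ induced by the choice of $V$. First I would record the consequence of regularity that every monomial $\theta^{A_y}$ contains at most one parameter from each block $S_i$, so that a nonempty $\mathbb{Y}_H$ selects at most one index out of each active block. For an \emph{independent} analysis the defining condition of Definition~\ref{def:multi} then forces $\mathbb{Y}_H=\emptyset$ whenever $|H|\ge 2$, so only the singletons $H=\{i\}$ with $i\in C$ survive; for a \emph{fully dependent} analysis every surviving $H$ selects one index from each block of some subset of the active blocks, and the full product occurs by hypothesis; for a \emph{conditionally dependent} analysis the surviving $H$ are the sets described by~(\ref{eq:nuova}). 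In each case I would substitute this list into~(\ref{eq:cond}) and first discard every term with $H\subseteq V$: for such $H$ the set $\{C\setminus V\}\cap H$ is empty, so the factor $\bar\theta_{\{C\setminus V\}\cap H}-\tilde\theta_{\{C\setminus V\}\cap H}$ equals $1-1=0$.

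For the surviving terms the set $\{C\setminus V\}\cap H$ of free indices met by $H$ is nonempty. Expanding each difference $\bar\theta_{\{C\setminus V\}\cap H}-\tilde\theta_{\{C\setminus V\}\cap H}$ as a telescoping sum over the free blocks met by $H$, I would rewrite the whole left-hand side of~(\ref{eq:cond}) as a sum, indexed by a free block $S_\ell$, of contributions each of which is a sum over a representative $i\in S_\ell\setminus V_\ell$ of $(\bar\theta_i-\tilde\theta_i)$ times a factor not involving $i$. That factor — the logarithm $\ln(\alpha\,\tilde\theta_{V\cap H}/\theta_{V\cap H})$, the weight $\sum_{y\in\mathbb{Y}_H}\theta_F^{A_{y,F}}$, and the remaining $\bar\theta$'s and $\tilde\theta$'s coming from the other blocks — does not see $i$, because $i\notin V$ and $\alpha$ contributes the same ratio $(1-|\tilde\theta_{V_\ell}|)/(1-|\theta_{V_\ell}|)$ for every such $i$. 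Hence each contribution carries the factor $\sum_{i\in S_\ell\setminus V_\ell}(\bar\theta_i-\tilde\theta_i)=|\bar\theta_{S_\ell\setminus V_\ell}|-|\tilde\theta_{S_\ell\setminus V_\ell}|$, which vanishes because $\operatorname{Q}\in L_{\textnormal{sensi}}$ forces $\bar\theta_{V_\ell}=\tilde\theta_{V_\ell}$ while $\bar\theta_{S_\ell}$ and $\tilde\theta_{S_\ell}$ both lie on $\Delta_{\#S_\ell-1}$. Summing over $\ell$ annihilates~(\ref{eq:cond}).

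I expect the main obstacle to be twofold. First, the invariance of the weight $\sum_{y\in\mathbb{Y}_H}\theta_F^{A_{y,F}}$ under replacing one free representative inside a block needs a genuine argument: one must exhibit a measure-preserving correspondence between $\mathbb{Y}_H$ and the swapped set, using that the parameters indexed by $F$ lying ``after'' the block $S_\ell$ form complete conditional groups summing to one, and this must be phrased to cover staged trees and not only Bayesian networks. Second, and more delicate, is the bookkeeping in the \emph{conditionally dependent} case: the index sets in~(\ref{eq:nuova}) are nested rather than a Cartesian product, so the grouping above must be carried out one level at a time along the ordered sequence $S_{k_1},\dots,S_{k_l}$, peeling off the innermost free block first and invoking at each stage both $\bar\theta_{V_{k_j}}=\tilde\theta_{V_{k_j}}$ and $|\bar\theta_{S_{k_j}\setminus V_{k_j}}|=|\tilde\theta_{S_{k_j}\setminus V_{k_j}}|$; verifying that the cancellation closes at every level is where the computation is most involved. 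The \emph{fully dependent} case is intermediate, the product structure making the grouping immediate and the same telescoping applying directly.
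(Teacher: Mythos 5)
Your proposal is correct and follows essentially the same route as the paper's proof in \ref{proofino}: both reduce the claim to showing that the left-hand side of (\ref{eq:cond}) vanishes for every $\operatorname{Q}\in L_{\textnormal{sensi}}$ (so that $L_{\textnormal{sensi}}=L_{\textnormal{constr}}$ and Corollary \ref{cor:bo} applies), use regularity to restrict the surviving sets $H$ (singletons for independent analyses, Cartesian products of blocks for fully dependent ones, the nested sets of (\ref{eq:nuova}) for conditionally dependent ones), and cancel block by block via $\sum_{j\in S_\ell\setminus V_\ell}\bar\theta_j=\sum_{j\in S_\ell\setminus V_\ell}\tilde\theta_j=1-|\tilde\theta_{V_\ell}|$, your telescoping and regrouping being only a different bookkeeping for the paper's iterative peeling over $R\subseteq[r]$ and its reduction of the conditionally dependent case to the fully dependent one. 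The ``obstacle'' you flag---that the weight $\sum_{y\in\mathbb{Y}_H}\theta_{F}^{A_{y,F}}$ must be unchanged when the free representative within a block is swapped---is precisely the step the paper performs silently in passing from (\ref{eq:prova2}) to (\ref{eq:prova3}), so noting it is a fair point of care rather than a departure from, or a gap relative to, the published argument.
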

The proof is given in \ref{proofino}. Notice that the result holds for regular MMs and in particular it holds for all the already mentioned graphical models entertaining a monomial parametrization. Illustrations of this result were given in  Section \ref{sec:basta}: in the first four cases, corresponding to simple or ordered analyses, the Pythagorean identity holds for all $\textnormal{Q}\in L_{\textnormal{sensi}}$ and thus the $\tilde\theta_V$-proportional covariation scheme is the I-projection over the set of all distribution of interest. Conversely, in the fifth case of  Section \ref{sec:basta}, which does not correspond to any of the new multi-way analyses of Definition \ref{def:multi}, the Pythagorean identity holds in a restricted set of distributions, namely $L_{\textnormal{constr}}$. Thus, as specified by Corollary \ref{cor:bo}, proportional covariation is the I-projection over this restricted space only.

 \subsection{BN classifiers}
 \label{sec:class}
BN classifiers are BNs whose graph entertains some specific properties designed for classification problems. BN classifiers have been successfully used in a wide array of real-world applications, with a competitive predictive performance against other classification techniques, because of their intuitiveness and computational efficiency~\citep[see e.g.][for a review]{Bielza2014}. A BN classifier  is defined by partitioning the BN vertex set into the set of features $Fe$ and the classes $Cl$, so that $\mathcal{V}=\{Y_i:i\in  Fe\}\cup\{Y_i:i\in Cl\}$. It is customary that the edge set of a BN classifier is such that feature variables are not allowed to have class children. For simplicity here we focus on univariate classification problems where there is a single class variable. However  our results apply to multidimensional classes since in a BN classifier the class variables can be collapsed into a unique vertex.

\begin{figure}
\begin{center}
\begin{subfigure}{0.32\textwidth}
\centerline{
\scalebox{0.75}{
\xymatrix{
&*+[Fo]{Y_{Cl}}\ar[rd]\ar[d]\ar[ld]&\\
*+[Fo]{Y_{Fe_1}}&*+[Fo]{Y_{Fe_2}}&*+[Fo]{Y_{Fe_3}}\\
&&
}}
}
\vspace{0.22cm}
\caption{\footnotesize{A Naive Bayes classifier.}\label{fig:BNC1}}
\end{subfigure}
\begin{subfigure}{0.32\textwidth}
\centerline{
\scalebox{0.75}{
\xymatrix{
&*+[Fo]{Y_{Cl}}\ar[rd]\ar[d]\ar[ld]&\\
*+[Fo]{Y_{Fe_1}}&*+[Fo]{Y_{Fe_2}}&*+[Fo]{Y_{Fe_3}}\\
&*+[Fo]{Y_{Sp}}\ar[ru]\ar[u]\ar[lu]
}
}}
\caption{\footnotesize{A SPODE classifier with super parent $Y_{Sp}$.}\label{fig:BNC2}}
\end{subfigure}
\begin{subfigure}{0.32\textwidth}
\centerline{
\scalebox{0.75}{
\xymatrix{
&*+[Fo]{Y_{Cl}}\ar[rd]\ar[d]\ar[ld]&\\
*+[Fo]{Y_{Fe_1}}\ar[r]&*+[Fo]{Y_{Fe_2}}\ar[r]&*+[Fo]{Y_{Fe_3}}\\
&&
}}
}
\vspace{0.22cm}
\caption{\footnotesize{A generic BN classifier.}\label{fig:BNC3}}
\end{subfigure}
\end{center}
\vspace{-0.3cm}
\caption{Examples of BN classifiers.}
\end{figure}
 
BN classifiers range from the simplest naive Bayes classifier where the features are conditionally independent given the class variable (given in Figure~\ref{fig:BNC1}), to generic dependence structures between the features (as for example in Figure~\ref{fig:BNC3}). A BN classifier of interest is the super-parent-one-dependence estimator (SPODE)~\citep{Keogh2002} where all features depend on one specific feature called super-parent (see Figure~\ref{fig:BNC2}).
 
 Since BN classifiers are BN models, they can be represented as MMs as shown in Example~\ref{ex:BN}. However, notice that this is not the only monomial representation of a BN classifier~\citep[see e.g.][]{Varando2015,Varando2016}. Since BN classifiers are MMs, we can apply our methodology and deduce the following result.
 
 \begin{theorem}
 \label{theo:CBN}
 Consider  a naive Bayes classifier with features $Y_{Fe_1},\dots,Y_{Fe_m}$. In the notation of Theorem \ref{theo:pincopallo}, if $\tilde\theta_V$ is chosen so that $V\subset\times_{i\in[m]}\mathbb{Y}_{Fe_i}$, then $\tilde{\operatorname{P}}$ is the I-projection of $\operatorname{P}$ in $L_{\textnormal{sensi}}$.
 \end{theorem}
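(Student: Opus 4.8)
The plan is to verify, for \emph{every} $\operatorname{Q}\in L_{\textnormal{sensi}}$, the characterizing identity (\ref{eq:cond}) of Theorem \ref{theo:pincopallo}; this forces $L_{\textnormal{sensi}}=L_{\textnormal{constr}}$, and Corollary \ref{cor:bo} then delivers that $\tilde{\operatorname{P}}$ is the I-projection of $\operatorname{P}$ in $L_{\textnormal{sensi}}$. Recall from Example \ref{ex:BN} that a Naive Bayes classifier is a regular multilinear MM whose parameter partition $S$ consists of the class prior $S_{Cl}=\{\operatorname{P}(Y_{Cl}=c)\}_{c\in\mathbb{Y}_{Cl}}$ together with, for every feature $i\in[m]$ and class value $c$, the conditional distribution $S_{i,c}=\{\operatorname{P}(Y_{Fe_i}=a\mid Y_{Cl}=c)\}_{a\in\mathbb{Y}_{Fe_i}}$. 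The hypothesis $V\subset\times_{i\in[m]}\mathbb{Y}_{Fe_i}$ means precisely that $V$ meets none of $S_{Cl}$, so $S_{Cl}\subseteq F$, while $C=\bigcup\{S_{i,c}:V\cap S_{i,c}\neq\emptyset\}$ and every conditional contained in $C$ carries at least one varied parameter.

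First I would describe the nonempty fibres $\mathbb{Y}_H$, $H\subseteq C$. Because each atom of a Naive Bayes classifier carries a single class value $c$, all feature parameters dividing $\operatorname{P}(y)$ share that class index; hence every such $H$ has the form $H_{c,\mathbf a}=\{\operatorname{P}(Y_{Fe_i}=a_i\mid Y_{Cl}=c):i\in I_c\}$ with $I_c=\{i:S_{i,c}\subseteq C\}$ and $\mathbf a=(a_i)_{i\in I_c}$. In particular the $H$'s belonging to distinct class values are pairwise disjoint, so the outer sum in (\ref{eq:cond}) splits over $c$. For fixed $H=H_{c,\mathbf a}$ the atoms of $\mathbb{Y}_H$ range over the free feature outcomes at the inactive conditionals $S_{j,c}\subseteq F$, and summing these out through the sum-to-one conditions collapses $\sum_{y\in\mathbb{Y}_H}\theta_F^{A_{y,F}}$ to the single class-prior factor $\operatorname{P}(Y_{Cl}=c)$. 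A short computation with Definition \ref{def:propcov} also gives $\alpha\,\tilde\theta_{V\cap H}/\theta_{V\cap H}=\prod_{i\in I_c}\tilde\theta_{a_i c}/\theta_{a_i c}$, so the logarithm in (\ref{eq:cond}) separates additively over $i\in I_c$, with each covaried term $\ln(\tilde\theta_{a_i c}/\theta_{a_i c})$ depending on $(i,c)$ only.

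Substituting, (\ref{eq:cond}) reduces to $\sum_{c}\operatorname{P}(Y_{Cl}=c)\,T_c=0$, where $T_c$ is the analogue of the left-hand side of (\ref{eq:cond}) for the sub-model conditioned on $Y_{Cl}=c$. In that sub-model the varied parameters lie in the feature conditionals $S_{i,c}$, $i\in I_c$, and every product $\prod_{i\in I_c}\operatorname{P}(Y_{Fe_i}=b_i\mid Y_{Cl}=c)$ divides an atomic probability, so this conditional sub-analysis is a \emph{fully dependent} analysis in the sense of Definition \ref{def:multi}. Hence, by exactly the telescoping used in the fully dependent case of Theorem \ref{theo:final} --- the one exploiting $\sum_{a}\bar\theta_{a c}=\sum_{a}\tilde\theta_{a c}=1-|\tilde\theta_{V\cap S_{i,c}}|$ within each conditional --- every $T_c$ vanishes for all $\operatorname{Q}\in L_{\textnormal{sensi}}$. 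Since $\operatorname{P}(Y_{Cl}=c)>0$, the whole sum vanishes, $L_{\textnormal{sensi}}=L_{\textnormal{constr}}$, and the theorem follows from Corollary \ref{cor:bo}.

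The main obstacle is conceptual rather than computational: the analysis just described is in general \emph{not} one of the three schemes of Definition \ref{def:multi}, being a union over the class values of fully dependent blocks rather than globally independent, fully dependent, or conditionally dependent, so Theorem \ref{theo:final} cannot be invoked as a black box. One must instead return to the master identity (\ref{eq:cond}) and use the conditional-independence structure of the Naive Bayes graph to decouple it over the class values; the delicate step is checking that the weights $\sum_{y\in\mathbb{Y}_H}\theta_F^{A_{y,F}}$ collapse \emph{exactly} to $\operatorname{P}(Y_{Cl}=c)$ with no residual feature-conditional factors, since any such residue would couple the blocks and block the reduction to the already-settled fully dependent case.
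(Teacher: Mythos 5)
Your proposal is correct, and its core decomposition — across class values the varied feature parameters never share a monomial, while within a fixed class value the active feature conditionals behave like a fully dependent analysis — is exactly the idea behind the paper's own (very terse) proof. The execution differs, though, in a way worth noting. The paper simply cites Theorem~\ref{theo:final}: it observes that parameters attached to different values of $Y_{Cl}$ give an ``independent'' flavour and that, per class instantiation, the features give a ``totally dependent'' flavour, and concludes optimality since proportional covariation is optimal in both regimes. You instead return to the master condition (\ref{eq:cond}) of Theorem~\ref{theo:pincopallo}: you classify the nonempty fibres $\mathbb{Y}_H$ as $H_{c,\mathbf a}$ indexed by one class value and one outcome per active conditional, verify that $\sum_{y\in\mathbb{Y}_H}\theta_F^{A_{y,F}}$ collapses exactly to $\operatorname{P}(Y_{Cl}=c)$ (correct: the free feature conditionals sum out to one, leaving only the class prior, which lies in $F$ since $V$ avoids $S_{Cl}$), check $\alpha\,\tilde\theta_{V\cap H}/\theta_{V\cap H}=\prod_{i\in I_c}\tilde\theta_{a_ic}/\theta_{a_ic}$, and then show (\ref{eq:cond}) splits as $\sum_c \operatorname{P}(Y_{Cl}=c)\,T_c$ with each $T_c$ vanishing by the telescoping of the fully dependent case, so $L_{\textnormal{sensi}}=L_{\textnormal{constr}}$ and Corollary~\ref{cor:bo} applies. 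What your route buys is rigour precisely where the paper is loose: as you point out, when $V$ touches conditionals for several class values and several features per class, the scheme is in general none of the three analyses of Definition~\ref{def:multi} (the independence condition fails within a class block, and the divisibility conditions of the fully/conditionally dependent cases fail across class values), so Theorem~\ref{theo:final} cannot be invoked as a black box; your blockwise verification of (\ref{eq:cond}) effectively extends that theorem to such mixtures and makes the classifier result airtight, at the cost of redoing part of the computation in \ref{proofino}. The paper's version is shorter and conveys the same intuition, but your write-up is the more defensible proof.
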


\begin{proof}
 This result follows from Theorem~\ref{theo:final} by noticing that (co)varied parameters conditionally on different values of $Y_{Cl}$ never appear in the same monomial, thus giving a simple sensitivity analysis. For each  instantiation of $Y_{Cl}$, the feature variables are independent, thus giving a complete sensitivity analysis. Since for these two analyses the $\tilde\theta_V$-proportional covariation scheme is optimal the result then follows.
\end{proof}

Thus in a naive Bayes classifier for any choice of conditional probabilities from the feature variables to be varied, proportional covariation is optimal. This result can be extended straightforwardly to SPODE classifiers by excluding the super-parent node from the feature variables set. Then for any variation of probabilities of the other features, proportional covariation is optimal. For generic BN classifiers the optimality of proportional covariation holds for the cases formalized in Theorem~\ref{theo:pincopallo} and Theorem~\ref{theo:final}.

\section{A real-world application}
\label{sec:real}
In this section we study a subset of the dataset of \citet{eisner2011} including metabolomic
information of 77 individuals: 47 of them suffering of cachexia, whilst the remaining do not.
Cachexia is a metabolic syndrome characterized by loss of muscle with or without loss of fat mass. Although the study of \citet{eisner2011} included 71 different metabolics which
could possibly distinguish individuals who suffer of Cachexia from those who do not, for our
illustrative purposes we focus on only six of them: Adipate (A), Betaine (B), Fumarate (F),
Glucose (GC), Glutamine (GM) and Valine (V). These metabolics are measured in a continuous scale and have been recently investigated in the context of Gaussian BNs \citep{Gorgen2020}. The variables are discretized into three levels using the equal frequency method \citep[see e.g.][]{Ropero2018}. A BN over  these variables together with the variable muscle loss (ML , taking values ``yes'' for individuals who suffered of cachexia and ``no'' otherwise) is learnt using  the Bayesian hill-climbing method  implemented in the \texttt{bnlearn} R package \citep{Scutari2010} and reported in Figure \ref{fig:network}. Since there are six ternary and one binary variables, the model has 1458 atomic probabilities. Furthermore, because of the network structure there are 54 parameters, and consequently the BN in Figure \ref{fig:network} could be represented by a MM with associated matrix $A$ of dimension $1458\times 54$.

\begin{figure}
\centering
\includegraphics[scale=0.5]{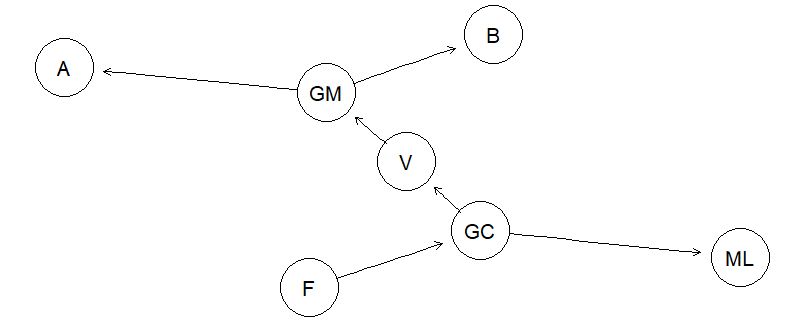}
\caption{Learnt BN for the cachexia dataset using hill-climbing. \label{fig:network}}
\end{figure}

\citet{eisner2011} reports that the variable Adipate (A) has the largest mutual information with muscle loss (ML). The estimated CPT of ML given A and computed using the \texttt{gRain} package \citep{soren} is 
\begin{center}
\def\arraystretch{1.6}
\begin{tabular}{c|c|c|c}

MC / A & low & average & high \\ 
\hline
yes & 0.49 & 0.64 & 0.70\\
\hline
no & 0.51 & 0.36 & 0.30\\

\end{tabular}
\end{center}
As the level of Adipate increases, the probability of having the disease increases and the probability of having the disease given a high level of Adipate is estimated as 0.70. Given that Adipate is marginally a powerful predictor for muscle loss, due to the largest value of mutual information, it is of particular interest to estimate correctly the probability of ML = yes given A = high and to investigate the effect of misspecifications of this probability. Using the \texttt{sensquery} function of \texttt{bnmonitor} the required individual parameter changes to obtain a conditional probability of either 0.68 or 0.72 are computed together with the associated CD distances and I-divergences. These are reported in Table \ref{tavola}. The software reports only one parameter change per CPT and uses proportional covariation, since the scheme is optimal for one-way analyses. There are multiple parameter changes that independently meet the constraint of the probability of ML = yes given A = high being equal to either 0.72 or 0.68. For the two cases the CD distances are comparable, but the I-divergence is overall slightly larger when the conditional probability is decreased. It can be further noticed that in order to achieve a change of $\pm 0.02$ in the conditional probability all parameters need to be changed by more than 0.1, with the exception of F = low for an increase to 0.72.

\begin{table}
\begin{center}
\begin{tabular}{|c|c|c|c|c|c|}
\toprule
\multicolumn{6}{c}{P(ML = yes $|$ A = high) = 0.72}\\
\midrule 
Parameter & Old Value & New Value  & Abs. Diff. & CD Dist. & I-Diver. \\
\midrule
P(F = low) & 0.34 & 0.25 & 0.09& 0.42 & 0.0018 \\
P(ML = yes $|$ GC = low) & 0.39 & 0.51 & 0.12 & 0.50 & 0.011\\
P(GC = high $|$ F = low) & 0.04 & 0.14 & 0.10 &1.31  & 0.025\\
P(V = low $|$ GC = average) & 0.08 & 0.35 & 0.27& 1.79 & 0.091\\
P(GM = low $|$ V = low) & 0.80 & 0.97 & 0.17 & 2.08 & 0.043\\
P(A = low $|$ GM = average) & 0.19 & 0.67 & 0.48 &2.13 & 0.181 \\
\bottomrule
\multicolumn{6}{c}{}\\
\toprule
\multicolumn{6}{c}{P(ML = ys $|$ A = high) = 0.68}\\
\midrule 
Parameter & Old Value & New Value & Abs. Diff. & CD Dist. & I-Diverg. \\
\midrule
P(F = low) & 0.34 & 0.44 & 0.10 & 0.43 & 0.022 \\
P(ML = yes $|$ GC = low) & 0.39 & 0.23 & 0.16 &0.74 & 0.019\\
P(GM = low $|$ V = low) & 0.80 & 0.55 & 0.25 &1.19 & 0.055\\
P(V = low $|$ GC = low) & 0.88 & 0.58 & 0.30 &1.65 & 0.094\\
P(A = low $|$ GM = low) & 0.79 & 0.34 & 0.45 &2.01 & 0.156 \\
P(GC = average $|$ F = low) & 0.19 & 0.72 &0.53 & 2.37 & 0.219\\
\bottomrule
\end{tabular}
\end{center}
\caption{Required parameter changes in one-way sensitivity analysis  to entertain constraints on the conditional probability of ML = yes given A = high. Columns: Parameter - varied probability; Old Value - original probability value; New Value -  required probability change; Abs. Diff. - absolute value of the difference between Old Value and New Value; CD Dist. - CD distance between the original and the varied BN; I-Diver. - I-divergence between the original and the varied BN. \label{tavola}}
\end{table}

For ease of exposition we next consider only 2-way sensitivity analyses considering the parameters that \texttt{bnmonitor} suggested changing in one-way sensitivity analysis. Table \ref{quasi} reports the results for every pair of parameters varied and using proportional covariation. The following conclusions can be made:
\begin{itemize}
\item the required changes to the parameters are smaller in absolute value compared to one-way analyses;
\item the CD distance and the I-divergence takes a value inbetween those of the one-way analysis of the corresponding parameters;
\item the vast majority of pairs of varied parameters correspond to the novel sensitivity analyses introduced in Section \ref{sec:multicov}. For all these choices of parameters, Theorem \ref{theo:final} guarantees that proportional covariation minimizes the I-divergence between the original and the varied BNs;
\item five out of the 30 pairs of varied parameters do not correspond to any of the novel sensitivity methods of Section \ref{sec:multicov}. For such pairs, it can be shown that indeed proportional covariation does minimize neither the CD distance nor the I-divergence.
\end{itemize}

\begin{table}
\begin{center}
\begin{tabular}{|c|c|c|c|c|c|}
\toprule
\multicolumn{6}{c}{P(ML = cachexic $|$ A = high) = 0.72}\\
\midrule 
Parameter1 & Parameter2 & Type & Avg. Change & CD Dist. & I-Diver.\\
\midrule
P(F = low) & P(ML = yes $|$ GC = low) & complete &  0.06&0.50 & 0.008\\ 
P(F = low) & P(GC = high $|$ F = low) & ordered & 0.05& 1.03 & 0.015\\ 
P(F = low) & P(V = low $|$ GC = average) & complete & 0.06 & 0.78 & 0.017\\
 P(F = low)& P(GM = low $|$ V = low)  & complete & 0.05 & 0.48 & 0.018\\
 P(F = low)& P(A = low $|$ GM = average)  & complete & 0.19 & 1.71 & 0.102 \\
 P(ML = yes $|$ GC = low) & P(GC = high $|$ F = low) & \textbf{none} & 0.06 & 0.47 & 0.009\\
 P(ML = yes $|$ GC = low) &P(V = low $|$ GC = average) & simple & 0.09 & 1.07 & 0.028\\
  P(ML = yes $|$ GC = low) & P(GM = low $|$ V = low) & complete & 0.08 & 1.50 & 0.027\\
 P(ML = yes $|$ GC = low) &P(A = low $|$ GM = average)  & complete & 0.10 & 0.95 & 0.016\\ 
P(GC = high $|$ F = low) & P(V = low $|$ GC = average) & \textbf{none} & 0.12 & 1.57 & 0.064\\
P(GC = high $|$ F = low) & P(GM = low $|$ V = low) &  complete & 0.08 & 1.73 & 0.024\\
P(GC = high $|$ F = low) & P(A = low $|$ GM = average) & complete & 0.12 & 1.92 & 0.042 \\
P(V = low $|$ GC = average) & P(GM = low $|$ V = low) & ordered & 0.13 & 1.70 & 0.079\\
P(V = low $|$ GC = average) &P(A = low $|$ GM = average)  & complete & 0.19 & 2.33 & 0.079 \\
 P(GM = low $|$ V = low) & P(A = low $|$ GM = average)& \textbf{none} & 0.15 & 1.93 & 0.047\\
\bottomrule
\multicolumn{6}{c}{}\\
\toprule
\multicolumn{6}{c}{P(ML = cachexic $|$ A = high) = 0.68}\\
\midrule 
Parameter1 & Parameter2 & Type & Avg. Change & CD Dist. & I-Diver.\\
\midrule
P(F = low) & P(ML = yes $|$ GC = low) & complete & 0.07 & 0.69 & 0.015\\
P(F = low) & P(GM = low $|$ V = low) & complete & 0.10 & 1.05 & 0.035\\
P(F = low) & P(V = low $|$ GC = low) & complete & 0.06 & 0.63 & 0.020\\
P(F = low) &P(A = low $|$ GM = low) & complete & 0.25 & 0.64 & 0.020\\
P(F = low) & P(GC = average $|$ F = low) & ordered & 0.23 & 2.00 & 0.166 \\
P(ML = yes $|$ GC = low) & P(GM = low $|$ V = low) & complete & 0.08 & 0.83 & 0.012\\
P(ML = yes $|$ GC = low) & P(V = low $|$ GC = low) & complete& 0.09 & 1.08 & 0.017\\
P(ML = yes $|$ GC = low) & P(A = low $|$ GM = low) & complete & 0.19 & 1.70 & 0.100\\
P(ML = yes $|$ GC = low) & P(GC = average $|$ F = low) & \textbf{none} & 0.23 & 1.93 & 0.124\\
P(GM = low $|$ V = low)  & P(V = low $|$ GC = low) & ordered  & 0.15 & 1.15 & 0.044\\
P(GM = low $|$ V = low) &P(A = low $|$ GM = low) & ordered & 0.16 & 1.15 & 0.047\\
P(GM = low $|$ V = low)  & P(GC = average $|$ F = low) & complete & 0.21 & 2.05 & 0.074\\
P(V = low $|$ GC = low) & P(A = low $|$ GM = low) & complete & 0.19 & 2.12 & 0.070\\
P(V = low $|$ GC = low) & P(GC = average $|$ F = low) & \textbf{none} & 0.21 & 1.64 & 0.081\\
P(A = low $|$ GM = low) &P(GC = average $|$ F = low) & complete & 0.27 & 2.55 & 0.157 \\
\bottomrule
\end{tabular}
\end{center}
\caption{All possible 2-way sensitivity analyses for the varied parameters in Table \ref{tavola}. Columns: Parameter1 - first varied probability; Parameter2 - second varied probability; Type - type of sensitivity analysis (either simple, complete, ordered or none); Avg. Change - mean of the absolute difference between the old and the new probabilities; CD Dist. - CD distance between the original and the varied BN; I-Diver. - I-divergence between the original and the varied BN. \label{quasi}}
\end{table}
%%%%%%%%%%%%%%%%%%%%%%%
%%%%%%%%%%%%%%%%%%%%%%%
%%%%%%%%%%%%%%%%%%%%%%%
\section{Discussion}
\label{sec:discussion}

The representation of a wide array of statistical models in terms of the defining atomic monomial probabilities has proven useful for a variety of applications, including sensitivity analysis. In this paper, we took advantage of this representation to develop a  formal geometric approach for sensitivity analysis which uses elements of information geometry. This approach has enabled us to demonstrate the optimality of proportional covariation for novel multi-way choices of varied parameters defined by the characteristics of the monomial atomic probabilities. Attention was 
%also 
devoted to BN classifiers where the tuning of the feature probabilities is often critical to ensure the classifier works reliably.

Although in this work we focused on models having multilinear atomic probabilities, our geometric approach could be used to investigate more general classes of models, for instance dynamic BNs whose atomic probabilities are not necessarily multilinear. Preliminary results   suggest that the I-divergence exhibit different properties than in the multilinear case, with the potential of even more informative sensitivity investigations.

{% Furthermore, here we have focused 
We concentrated  on I-divergences but other measures of closeness between distributions could have been considered, for instance the already mentioned $\phi$-divergences and CD distances. It is yet unknown whether our newly introduced covariation schemes would be optimal under these other measures. 
%We plan to investigate this further in future work.

The \texttt{bnmonitor} R package was used in a real-world application to understand the relationship between various metabolics and muscle loss. However, its current implementation only provides a user-friendly implementation of one-way sensitivity methods. In a future version of \texttt{bnmonitor} we plan to also implement multi-way methods to provide a more comprehensive toolbox for sensitivity analysis in BNs.
%%%%%%%%%%%%%%%%%%%%%%%
%%%%%%%%%%%%%%%%%%%%%%%
%%%%%%%%%%%%%%%%%%%%%%%

\bibliographystyle{plainnat} 
\bibliography{bib1}

%%%%%%%%%%%%%%%%%%%%%%%
%%%%%%%%%%%%%%%%%%%%%%%
%%%%%%%%%%%%%%%%%%%%%%%

\appendix
%%%%%%%%%%%%%%%%%%%%%%%
%%%%%%%%%%%%%%%%%%%%%%%
%%%%%%%%%%%%%%%%%%%%%%%

%%%%%%%%%%%%%%%%%%%%%%%
%%%%%%%%%%%%%%%%%%%%%%%
%%%%%%%%%%%%%%%%%%%%%%%
\section{Proofs}
\label{sec:proof}

\subsection{Proof of Theorem \ref{theo:pincopallo}}
\label{sec:prooftheo}
Substituting $\bar{\theta}_V=\tilde\theta_V$ and $\textnormal{Q}\in\textnormal{Slice}(\theta_F)$, we can write $\mathcal{D}(\textnormal{Q}||\textnormal{P})$ as 
\begin{equation}
\label{eq:1}
\mathcal{D}(\textnormal{Q}||\textnormal{P})=\sum_{y\in\mathbb{Y}}\theta_F^{A_{y,F}}\tilde\theta_V^{A_{y,V}}\bar\theta_F^{A_{y,C\setminus V}}\ln\frac{\theta_F^{A_{y,F}}\tilde\theta_V^{A_{y,V}}\bar\theta_F^{A_{y,C\setminus V}}}{\theta_F^{A_{y,F}}\theta_V^{A_{y,V}}\theta_F^{A_{y,C\setminus V}}}=\sum_{y\in\mathbb{Y}}\theta_F^{A_{y,F}}\tilde\theta_V^{A_{y,V}}\bar\theta_F^{A_{y,C\setminus V}}\ln\frac{\tilde\theta_V^{A_{y,V}}\bar\theta_F^{A_{y,C\setminus V}}}{\theta_V^{A_{y,V}}\theta_F^{A_{y,C\setminus V}}}.
\end{equation}
For all $\emptyset\neq H\subset[k]$,  define $
\mathbb{Y}^{=}_H=\{y\in\mathbb{Y}:A_{y,i}=0,\mbox{ for all }i\in H\}.
$
Now (\ref{eq:1}) can be split as 
\begin{equation}
\label{eq:2}
\mathcal{D}(\textnormal{Q}||\textnormal{P})=\sum_{y\in\mathbb{Y}\setminus \mathbb{Y}_C^{=}}\theta_F^{A_{y,F}}\tilde\theta_V^{A_{y,V}}\bar\theta_{C\setminus V}^{A_{y,C\setminus V}}\ln\frac{\tilde\theta_V^{A_{y,V}}\bar\theta_{C\setminus V}^{A_{y,C\setminus V}}}{\theta_V^{A_{y,V}}\theta_{C\setminus V}^{A_{y,C\setminus V}}}+\sum_{y\in\mathbb{Y}_C^{=}}\theta_F^{A_{y,F}}\tilde\theta_V^{A_{y,V}}\bar\theta_{C\setminus V}^{A_{y,C\setminus V}}\ln\frac{\tilde\theta_V^{A_{y,V}}\bar\theta_{C\setminus V}^{A_{y,C\setminus V}}}{\theta_V^{A_{y,V}}\theta_{C\setminus V}^{A_{y,C\setminus V}}},
\end{equation}
but since for all $y\in\mathbb{Y}_C^{=}$ and $i\in C$ $A_{y,i}=0$, the second term on the rhs of  (\ref{eq:2}) is equal to zero. The set $\mathbb{Y}\setminus\mathbb{Y}_C^{=}$ includes all events $y$ for which $A_{y,i}=1$ for at least one $i\in C$. Thus $\mathbb{Y}\setminus\mathbb{Y}_C^{=}=\bigcup\limits_{H\subseteq C, H\neq\emptyset}\mathbb{Y}_H$, recalling that $\mathbb{Y}_H$ is the set of events $y$ for which $A_{y,i}=1$ for $i\in H$ and $A_{y,i}=0$ for $i\in C\setminus H$. Furthermore since these sets $\mathbb{Y}_H$, for $H\subseteq C$, are disjoint we have that 
\begin{equation}
\label{eq:3}
\mathcal{D}(\textnormal{Q}||\textnormal{P})=\sum_{H\subseteq C, H\neq \emptyset}\sum_{y\in\mathbb{Y}_H}\theta_F^{A_{y,F}}\tilde\theta_V^{A_{y,V}}\bar\theta_{C\setminus V}^{A_{y,C\setminus V}}\ln\frac{\tilde\theta_V^{A_{y,V}}\bar\theta_{C\setminus V}^{A_{y,C\setminus V}}}{\theta_V^{A_{y,V}}\theta_{C\setminus V}^{A_{y,C\setminus V}}},
\end{equation}
where terms in the internal sum are only for $\mathbb{Y}_H\neq 0$. For any $H\subseteq C$, $\textnormal{P}\in \textnormal{MM}(A,S)$ and $y\in\mathbb{Y}_H$, by multilinearity it holds
\begin{equation}
\label{eq:3a}
\theta_V^{A_{y,V}}=\prod_{i\in V\cap H}\theta_i= \theta_{V\cap H},\hspace{1cm}
\theta_{C\setminus V}^{A_{y,C\setminus V}}=\prod_{i\in\{C\setminus V\}\cap H}\theta_i=\theta_{\{C\setminus V\}\cap H}.
\end{equation}
Substituting (\ref{eq:3a}) and using properties of the logarithm,  (\ref{eq:3}) simplifies to
\begin{equation}
\label{eq:4}
\mathcal{D}(\textnormal{Q}||\textnormal{P})=\sum_{\substack{H\subseteq C, H\neq \emptyset\\y\in\mathbb{Y}_H}}\theta_F^{A_{y,F}}\tilde\theta_{V\cap H}\bar\theta_{\{C\setminus V\}\cap H}\ln\frac{\tilde\theta_{V\cap H}}{\theta_{V\cap H}}+\sum_{\substack{H\subseteq C, H\neq \emptyset\\y\in\mathbb{Y}_H}}\theta_F^{A_{y,F}}\tilde\theta_{V\cap H}\bar\theta_{\{C\setminus V\}\cap H}\ln\frac{\bar\theta_{\{C\setminus V\}\cap H}}{\theta_{\{C\setminus V\}\cap H}}.
\end{equation}
Analogously 
\begin{equation}
\label{eq:5}
\mathcal{D}(\textnormal{Q}||\tilde{\textnormal{P}})=\sum_{H\subseteq C, H\neq \emptyset}\sum_{y\in\mathbb{Y}_H}\theta_F^{A_{y,F}}\tilde\theta_{V\cap H}\bar\theta_{\{C\setminus V\}\cap H}\ln\frac{\bar\theta_{\{C\setminus V\}\cap H}}{\tilde\theta_{\{C\setminus V\}\cap H}},
\end{equation}
\begin{equation}
\label{eq:6}
\mathcal{D}(\tilde{\textnormal{P}}||\textnormal{P})=\sum_{\substack{H\subseteq C, H\neq \emptyset\\y\in\mathbb{Y}_H}}\theta_F^{A_{y,F}}\tilde\theta_{V\cap H}\tilde\theta_{\{C\setminus V\}\cap H}\ln\frac{\tilde\theta_{V\cap H}}{\theta_{V\cap H}}+\sum_{\substack{H\subseteq C, H\neq \emptyset\\y\in\mathbb{Y}_H}}\theta_F^{A_{y,F}}\tilde\theta_{V\cap H}\tilde\theta_{\{C\setminus V\}\cap H}\ln\frac{\tilde\theta_{\{C\setminus V\}\cap H}}{\theta_{\{C\setminus V\}\cap H}}.
\end{equation}
In  (\ref{eq:5}) we used the assumption that $\bar{\theta}_V=\tilde\theta_V$ and in (\ref{eq:5}) and (\ref{eq:6}) we used Theorem \ref{theo:pinco}.
Next we use  the fact that $\tilde\theta_{\{C\setminus V\}\cap H}$ is computed via proportional covariation. For $H\subseteq C$ it holds that 
\begin{align}
\tilde\theta_{\{C\setminus V\}\cap H}&=\prod_{\substack{i\in[n]:\\ S_i\cap H \neq \emptyset}}\prod_{j\in \{ S_i\setminus V\}\cap H} \frac{1-|\tilde\theta_{V_i}|}{1-|\theta_{V_i}|}\theta_{\{S_i\setminus V\}\cap H}=\left(\prod_{\substack{i\in[n]:\\ S_i\cap H \neq \emptyset}}\prod_{j\in \{ S_i\setminus V\}\cap H} \frac{1-|\tilde\theta_{V_i}|}{1-|\theta_{V_i}|}\right)\theta_{\{C\setminus V\}\cap H} \nonumber\\
&= \alpha \theta_{\{C\setminus V\}\cap H} \label{eq:7}
\end{align} 
Substituting (\ref{eq:7}) into the logarithms in (\ref{eq:5}) and (\ref{eq:6}) and re-arranging the factors yields
\begin{equation}
\label{eq:8}
\mathcal{D}(\textnormal{Q}||\tilde{\textnormal{P}})=\sum_{\substack{H\subseteq C, H\neq \emptyset\\y\in\mathbb{Y}_H}}\theta_F^{A_{y,F}}\tilde\theta_{V\cap H}\bar\theta_{\{C\setminus V\}\cap H}\ln\frac{\bar\theta_{\{C\setminus V\}\cap H}}{\theta_{\{C\setminus V\}\cap H}}-\sum_{\substack{H\subseteq C, H\neq \emptyset\\y\in\mathbb{Y}_H}}\theta_F^{A_{y,F}}\tilde\theta_{V\cap H}\bar\theta_{\{C\setminus V\}\cap H}\ln\alpha
\end{equation}
\begin{equation}
\label{eq:9}
\mathcal{D}(\tilde{\textnormal{P}}||\textnormal{P})=\sum_{\substack{H\subseteq C, H\neq \emptyset\\y\in\mathbb{Y}_H}}\theta_F^{A_{y,F}}\tilde\theta_{V\cap H}\tilde\theta_{\{C\setminus V\}\cap H}\ln\frac{\tilde\theta_{V\cap H}}{\theta_{V\cap H}}+\sum_{\substack{H\subseteq C, H\neq \emptyset\\y\in\mathbb{Y}_H}}\theta_F^{A_{y,F}}\tilde\theta_{V\cap H}\tilde\theta_{\{C\setminus V\}\cap H}\ln\alpha
\end{equation}
At this stage the result is proven if under the condition in  (\ref{eq:cond}) the rhs of  (\ref{eq:4}) is equal to the sum of the rhs of (\ref{eq:8}) and (\ref{eq:9}). Since the second term on the rhs of  (\ref{eq:4}) is equal to the first term on the rhs of  (\ref{eq:8}), we can write $\mathcal{D}(\operatorname{Q}||\operatorname{P})=\mathcal{D}(\operatorname{Q}||\tilde{\operatorname{P}})+\mathcal{D}(\tilde{\operatorname{P}}||\operatorname{P})$ as 
\begin{multline}
\label{eq:10}
\sum_{\substack{H\subseteq C, H\neq \emptyset\\y\in\mathbb{Y}_H}}\theta_F^{A_{y,F}}\tilde\theta_{V\cap H}\bar\theta_{\{C\setminus V\}\cap H}\ln\frac{\tilde\theta_{V\cap H}}{\theta_{V\cap H}}+\sum_{\substack{H\subseteq C, H\neq \emptyset\\y\in\mathbb{Y}_H}}\theta_F^{A_{y,F}}\tilde\theta_{V\cap H}\bar\theta_{\{C\setminus V\}\cap H}\ln\alpha\\
-\sum_{\substack{H\subseteq C, H\neq \emptyset\\y\in\mathbb{Y}_H}}\theta_F^{A_{y,F}}\tilde\theta_{V\cap H}\tilde\theta_{\{C\setminus V\}\cap H}\ln\frac{\tilde\theta_{V\cap H}}{\theta_{V\cap H}}-\sum_{\substack{H\subseteq C, H\neq \emptyset\\y\in\mathbb{Y}_H}}\theta_F^{A_{y,F}}\tilde\theta_{V\cap H}\tilde\theta_{\{C\setminus V\}\cap H}\ln\alpha=0.
\end{multline}
By rearranging the terms in (\ref{eq:10}) we have that
\begin{equation}
 \sum_{\substack{H\subseteq C, H\neq \emptyset\\y\in\mathbb{Y}_H}}\theta_F^{A_{y,F}}\tilde\theta_{V\cap H}\left(\bar\theta_{\{C\setminus V\}\cap H}\ln\frac{\tilde\theta_{V\cap H}}{\theta_{V\cap H}}+\bar\theta_{\{C\setminus V\}\cap H}\ln\alpha+\tilde\theta_{\{C\setminus V\}\cap H}\ln\frac{\tilde\theta_{V\cap H}}{\theta_{V\cap H}}+\tilde\theta_{\{C\setminus V\}\cap H}\ln\alpha\right)=0,
\end{equation}
which yields
\begin{equation}
\label{eq:last}
 \sum_{\substack{H\subseteq C, H\neq \emptyset\\y\in\mathbb{Y}_H}}\theta_F^{A_{y,F}}\tilde\theta_{V\cap H}(\bar\theta_{\{C\setminus V\}\cap H}-\tilde\theta_{\{C\setminus V\}\cap H})\ln\left(\alpha\frac{\tilde\theta_{V\cap H}}{\theta_{V\cap H}}\right)=0.
\end{equation}
Noticing that (\ref{eq:last}) equals (\ref{eq:cond}), since only $\theta_F^{A_{y,F}}$ depends on the event $y\in\mathbb{Y}_H$,  proves the result.

\subsection{Proof of Theorem \ref{theo:final}}
\label{proofino}
The result is proven if the condition in (\ref{eq:cond}) holds for all $\textnormal{Q}\in L$. For a simple analysis, for all $i,j\in C$, the monomial $\theta_i\theta_j$ does not divide $\theta^{A_y}$ for any $y\in\mathbb{Y}$. Thus all sets $H$ in condition (\ref{eq:cond}) that need to be considered, i.e. those such that $\mathbb{Y}_H$ is non-empty, have one element only because of regularity. If $H$ is an element of $V$ then $\bar{\theta}_{\{C\setminus V\}\cap H}-\tilde\theta_{\{C\setminus V\}\cap H}=0$ by construction and the result thus follows. Conversely, if $H$ is an element of $C\setminus V$, condition (\ref{eq:cond}) holds if and only if
\begin{equation}
\label{eq:ciaone}
\sum_{j\in C\setminus V} \bar\theta_j-\tilde\theta_j=0.
\end{equation}
Next, (\ref{eq:ciaone}) can be rewritten as
\[
\sum_{i\in[n]:V_i\neq \emptyset} 1-|\tilde\theta_{V_i}|-1+|\tilde\theta_{V_i}|=0,
\]
which is always true. This proves Theorem \ref{theo:final} for simple analyses.

In a complete sensitivity analysis all sets $H$ in condition (\ref{eq:cond}) that need to be considered, i.e. those such that $\mathbb{Y}_H$ is non-empty, are in $\times_{i\in [n]:V_i\neq\emptyset}S_i$, by regularity. Thus, (\ref{eq:cond}) can be written as
\begin{equation}
 \sum_{\substack{H\in \times_{i\in[n]:V_i\neq\emptyset}S_i,\\ H\neq \emptyset}}\tilde\theta_{V\cap H}(\bar\theta_{\{C\setminus V\}\cap H}-\tilde\theta_{\{C\setminus V\}\cap H})\ln\left(\alpha\frac{\tilde\theta_{V\cap H}}{\theta_{V\cap H}}\right)\sum_{y\in\mathbb{Y}_H}\theta_{F}^{A_{y,F}}=0.
 \label{eq:prova0}
\end{equation}
Suppose with no loss of generality that the sets $S_i$ such that $V_i\neq \emptyset$ are those with index in the set $[r]$, $r\leq n$. Notice that
\begin{equation}
\bigtimes_{i\in [r]}S_i=\bigcup_{R\subseteq [r]}\left\{\bigtimes_{i\in R}V_i\bigtimes\bigtimes_{i\in [r]\setminus R} \{C_i\setminus V_i\}\right\}.
\label{eqeqeq}
\end{equation}
Thus the result is proven if  (\ref{eq:prova0}) holds for each $R\subseteq [r]$, i.e. if 
\begin{equation}
 \sum_{\substack{H\in R}}\sum_{J\in [r]\setminus R}\tilde\theta_{V\cap H}(\bar\theta_{\{C\setminus V\}\cap J}-\tilde\theta_{\{C\setminus V\}\cap J})\ln\left(\alpha\frac{\tilde\theta_{V\cap H}}{\theta_{V\cap H}}\right)\sum_{y\in\mathbb{Y}_{\{H\cup J\}}}\theta_{F}^{A_{y,F}}=0,
\label{eq:prova1}
\end{equation}
for $H$ and $J$ such that $\mathbb{Y}_{\{H\cup J\}}\neq\emptyset$. First notice that if $R=[r]$, then $\bar\theta_{\{C\setminus V\}\cap J}-\tilde\theta_{\{C\setminus V\}\cap J}=0$ by construction and the result follows. Now fix an $R\subset [r]$ and suppose $k\in [r]\setminus R$. So, (\ref{eq:prova1}) can be written as
\begin{equation}
\label{eq:prova2}
 \sum_{\substack{H\in R}}\sum_{J\in [r]\setminus R\setminus \{k\}}\sum_{j\in C_k\setminus V_k}\tilde\theta_{V\cap H}(\bar\theta_{\{C\setminus V\}\cap J}\bar{\theta}_j-\tilde\theta_{\{C\setminus V\}\cap J}\tilde\theta_{j})\ln\left(\alpha\frac{\tilde\theta_{V\cap H}}{\theta_{V\cap H}}\right)\sum_{y\in\mathbb{Y}_{\{H\cup J\cup \{j\}\}}}\theta_{F}^{A_{y,F}}=0.
\end{equation}
Noticing that $\sum_{j\in C_k\setminus V_k}\bar\theta_j=\sum_{j\in C_k\setminus V_k}\tilde\theta_j=1-|\tilde\theta_{V_k}|$,  (\ref{eq:prova2}) can be rearranged as
\begin{equation}
\label{eq:prova3}
\sum_{\substack{H\in R}}\sum_{J\in [r]\setminus R\setminus \{k\}}\tilde\theta_{V\cap H}\left(\bar\theta_{\{C\setminus V\}\cap J}(1-|\tilde\theta_{V_k}|)-\tilde\theta_{\{C\setminus V\}\cap J}(1-|\tilde\theta_{V_k}|)\right)\ln\left(\alpha\frac{\tilde\theta_{V\cap H}}{\theta_{V\cap H}}\right)\sum_{y\in\mathbb{Y}_{\{H\cup J\cup \{j\}\}}}\theta_{F}^{A_{y,F}}=0.
\end{equation}
By applying the same steps as in  (\ref{eq:prova2})-(\ref{eq:prova3}) for all $k\in [r]\setminus R$, we have that 
\begin{equation}
\sum_{\substack{H\in R}}\tilde\theta_{V\cap H}\left(\prod_{k\in [r]\setminus R}(1-|\tilde\theta_{V_k}|)-\prod_{k\in [r]\setminus R}(1-|\tilde\theta_{V_k}|)\right)\ln\left(\alpha\frac{\tilde\theta_{V\cap H}}{\theta_{V\cap H}}\right)\sum_{y\in\mathbb{Y}_{\{H\cup J\cup \{j\}\}}}\theta_{F}^{A_{y,F}}=0,
\end{equation}
which always holds, thus proving Theorem \ref{theo:final} for complete analyses.

The proof for ordered analyses follows from the one of complete sensitivity analyses by noticing that the sets $R\subseteq [r]$ for which condition (\ref{eq:prova1}) needs to hold is a subset of those already demonstrated in the complete case.
\end{document}